\def\marg#1#2{\def\marginnotetextwidth{\the\textwidth}\marginnote{\bf #1}{\bf #2}}
\numberwithin{equation}{section}
\newtheorem{theorem}{Theorem}[section]
\newtheorem{theorem*}{Theorem}
\newtheorem{prop}[theorem]{Proposition}
\newtheorem{lemma}[theorem]{Lemma}
\newtheorem{cor}[theorem]{Corollary}
\theoremstyle{definition}
\newtheorem{question}[theorem]{Question}
\newtheorem{example}[theorem]{Example}
\newtheorem{definition}[theorem]{Definition}
\newtheorem*{definition*}{Definition}
\newtheorem{remark}[theorem]{Remark}
\newcommand{\R}{{\mathbb R}}
\newcommand{\C}{{\mathbb C}}
\newcommand{\Z}{{\mathbb Z}}
\newcommand{\N}{{\mathbb N}}
\newcommand{\GL}{{\operatorname{\bf GL_n}}}
\newcommand{\G}{{\operatorname{\bf G}}}
\newcommand{\W}{{\operatorname{\bf W}}}
\newcommand{\w}{{\operatorname{\bf w}}}
\newcommand{\e}{{\mathfrak{e}}}
\newcommand{\rank}{{\operatorname{rank}}}
\newcommand{\ch}{{\operatorname{char}}}
\newcommand{\Imm}{\operatorname{Im}}
\newcommand{\Hom}{{\operatorname{Hom}}}
\title{Maps}
\author{}
\renewcommand{\GL}{\mathrm{GL}}
\newcommand{\SL}{\mathrm{SL}}
\newcommand{\E}{\mathcal{E}}
\newcommand{\SU}{\mathrm{SU}}
\newcommand{\PSL}{\mathrm{PSL}}
\newcommand{\PGL}{\mathrm{PGL}}
\renewcommand{\G}{\mathrm{G}}
\def\<{\langle}
\def\>{\rangle}
\def\tilde{\widetilde}
\def\o{\tilde{\omega}}
\def\phi{\varphi}
\def \W{\mathcal W}
\def \T{\mathcal T}
\def \H{\mathcal H}
\def \G{\mathcal G}
\def \E{\mathcal E}
\def\e{\epsilon}
\def\a{\mathfrak{a}}
\def\h{\mathfrak{h}}
\def\d{\mathfrak{d}}
\def\tr{\operatorname{tr}}
\def\Aut{\operatorname{Aut}}
\def\w{\tilde{w}}
\def\o{\tilde{\omega}}
\def\ep{\epsilon}
\def\t{\dot t}
\title{Word maps on perfect algebraic groups}
\author[Gordeev, Kunyavski\u\i , Plotkin] {Nikolai Gordeev, Boris Kunyavski\u\i , Eugene Plotkin}
\thanks{The research of the first author
was financially supported by the Ministry of Education and Science of the Russian Federation, project 1.661.2016/1.4. The research of the
second and third authors was supported by ISF grant
1623/16 and the Emmy Noether Research Institute for Mathematics. The paper was finished when the second author visited the MPIM (Bonn).}
\dedicatory{To Boris Isaakovich Plotkin on the occasion of his 90th birthday}
\address{Gordeev: Department of Mathematics, Herzen State
Pedagogical University, 48 Moika Embankment, 191186, St.Petersburg,
RUSSIA and Department of Mathematics of Sankt-Petersburg State University,  198504,  Universitetsky prospekt, 28, Peterhof, St. Petersburg, RUSSIA} \email{nickgordeev@mail.ru}
\address{Kunyavski\u\i : Department of
Mathematics, Bar-Ilan University, 5290002 Ramat Gan, ISRAEL}
\email{kunyav@macs.biu.ac.il}
\address{Plotkin: Department of Mathematics, Bar-Ilan University, 5290002 Ramat Gan,
ISRAEL} \email{plotkin@macs.biu.ac.il}
\thanks{}
\address{}
\email{}
\date{}
\def\e{\epsilon}
\def\a{\alpha}
\def\he{ h}
\begin{document}

\begin{abstract}
We extend Borel's theorem on the dominance of word maps from semisimple algebraic groups to some perfect groups.
In another direction, we generalize Borel's theorem to some words with constants. We also consider the surjectivity
problem for particular words and groups, give a brief survey of recent results, present some generalizations
and variations and discuss various approaches, with emphasis on new ideas, constructions and connections.
\end{abstract}

\maketitle
\section*{Introduction}

The main object of the present paper is a word map $\w\colon G^n\to G$ defined for any word $w=w(x_1,\dots ,x_n)$ from the free group
$F_n = \left<x_1, \dots, x_n\right>$ of rank $n$ and any group $G$. This map evaluates an $n$-tuple $(g_1,\dots ,g_n)$
by substituting in $w$ each $g_i$ instead of $x_i$, $g_i^{-1}$ instead of $x_i^{-1}$ and performing all group operations in $G$.

Our main interest is in studying the image of $\w$, in particular, the question when $\Imm \w = G$,
which, stated in simple-minded manner, consists in understanding whether the equation
\begin{equation} \label{eq-gen}
w(g_1,\dots ,g_n)=a
\end{equation}
with arbitrary right-hand side is solvable in $G$. (Note that the question on the structure of the {\it fibres} of word maps,
which, in the same simple-minded manner, consists in studying the number of solutions of equation \eqref{eq-gen} for varying
right-hand side, is not less interesting, leading to various results of equidistribution flavour, see, e.g.,
\cite{LaS}, \cite{BK}.)

In this paper we consider $G = \mathcal G(K)$ where $\mathcal G$ is a perfect algebraic group defined over a field $K$,
which includes, as a particular case, semisimple groups.
If $K$ is algebraically closed, we will identify $G$ with $\mathcal G$.

The starting point of all such considerations is Borel's theorem \cite{Bo2} saying that for any non-identity word $w$ the induced word map
$\w$ on any semisimple algebraic group is {\it dominant}. (For special words $w$ such as powers and  commutators the question on the surjectivity
of $\w$ was investigated much earlier.)

There are several eventual ways to strengthen Borel's theorem. First, one can try to extend the class of algebraic groups from semisimple groups
to more general ones. In Section  \ref{sec:perf} we consider the case where $G$ is a {\it perfect} algebraic group. For certain subclasses of
such groups we prove the dominance of arbitrary word maps.

Second, although in Borel's theorem $w$ is an arbitrary word, one can try to extend it to a more general case
of {\it words with constants}. This line of research was started in our earlier papers \cite{GKP1}, \cite{GKP2}. In Section \ref{sec:const}
we continue these considerations and prove some more dominance results.

Third, one can try to strengthen dominance results by proving the {\it surjectivity} of relevant word maps.
Usually, to get statements of this kind, one has to pay a certain price by restricting attention either
to particular words, or to particular groups, or to particular fields, or to any combination of above.
In Section \ref{sec:words} we consider the first option and give a brief survey of results available for
power words, commutators, and, more generally, Engel words. In Section \ref{sec:gr} we consider the case of
groups of Lie rank one. Here we generalize some of recent results by Bandman and Zarhin \cite{BZ} and discuss
various approaches which may eventually work in this case.

Throughout, if not stated otherwise, $K$ is an algebraically closed field. We postpone the investigation of
word maps on algebraic groups over some special fields (particularly, local fields) to a forthcoming paper.
We only note that here there is another type of problems, intermediate between the dominance and surjectivity:
namely, one can ask about the structure of the image of the word map with respect to the {\it natural topology}
arising from the ground field under consideration. This is important, for instance, for getting results of ``Waring type''
(we use the terminology introduced by Shalev, see, e.g., \cite{Sh1}), when one is interested in representing every element of $G$
as a product of word values). Whereas in the case where $K$ is algebraically closed such results follow from the dominance
in a straightforward way, much harder work is to be done over smaller fields. One can mention \cite{HLS} for the case where $K$ is a local
field as well as numerous papers devoted to finite groups of Lie type, see. e.g., \cite{Sh1}, \cite{LST} and the references therein,
as well as surveys \cite{BGK}, \cite{Sh2}, \cite{Li}. Returning to the real case, note the striking difference arising for {\it compact} groups $G$,
where the image of a word map may be arbitrarily small in the real topology \cite{Th} (of course, remaining Zarsiki dense).
Some more results for $p$-adic fields were obtained in \cite{GS},
\cite{AGKS}, \cite{HLS}. See \cite{Ku} for a brief survey of results available over global fields.







\bigskip

{\it Notation and terminology}

\medskip

$\N$, $\Z$, $\R$, $\C$ denote the set of natural numbers, the ring of integers, the fields of real and complex and numbers, respectively.

$\mathbb A^n_K$ is the affine space of dimension $n$ over the field $K$. We consider only affine varieties, which are closed irreducible subsets of
$\mathbb A^n_K$. For a variety $X$ and a subset $Y\subset X$ by $\overline{Y}$ we denote the Zariski closure of $Y$ in $X$.
In general, by topology we mean Zariski topology.

As mentioned above, the main object of our consideration is a perfect linear
algebraic group $\G$ defined over a field $K$. This means that the commutator group
$[\G,\G]$ coincides with $\G$. If $K$ is algebraically closed, we identify the group of $K$-points $G= \G(K)$ with $\G$.

The unipotent radical of $\G$ is denoted by $R_u(\G)$.

If a group $H$ acts on a set $X$, the symbol $X^H$ stands for the set of $H$-invariant elements of $X$.

$F_n =\left< x_1, \dots, x_n\right>$ is the free group of rank $n$,
$F_n^0 = F_n$, $F_n^i = [F_n^{i-1}, F_n^{i-1}]$.

When numbering our statements, we use capital letters for known results and numbers for new ones.


\section{Generalities on word maps on linear algebraic groups}


\bigskip

\subsection{$\bf \operatorname{\bf Aut}(F_n)$- and $\bf \operatorname{\bf Aut} (G)$-invariance.} \label{sec1.1} Let $G$ be an abstract group.

Let us start with mentioning the following obvious (and well-known) fact:

$\Imm \w$ is an $\Aut (G)$-invariant subset of $G$.

In another direction, let $\Aut (F_n)$ denote the group of automorphisms of $F_n$.

\begin{prop}
\label{pr1.1}
If $w_1, w_2\in F_n$ lie in the same $\Aut (F_n)$-orbit,
then for any group $G$ the maps
$\w_1, \w_2 \colon G^n\rightarrow G$ have the same image.
\end{prop}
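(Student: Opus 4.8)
The plan is to reduce the statement to the case of the generators of $\Aut(F_n)$. Recall that $\Aut(F_n)$ is generated by the Nielsen transformations: permutations of the $x_i$, inversion of a single generator $x_i \mapsto x_i^{-1}$, and elementary transvections $x_i \mapsto x_i x_j$ (with the other generators fixed). So it suffices to show that if $w_2$ is obtained from $w_1$ by applying a single such generator $\sigma \in \Aut(F_n)$, then $\Imm \w_1 = \Imm \w_2$; the general case follows by composing finitely many such steps, and since the relation ``having the same image'' is transitive, this yields the claim for an arbitrary element of $\Aut(F_n)$.

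For the reduction step, I would use the following observation: if $\sigma \in \Aut(F_n)$ and $w_2 = \sigma(w_1)$, then the word map $\w_2$ factors as $\w_2 = \w_1 \circ \sigma^*$, where $\sigma^* \colon G^n \to G^n$ is the map on tuples induced by $\sigma$ (substituting, in the expression of $\sigma(x_i)$ as a word in the $x_j$, the group elements $g_j$). Indeed, evaluating $\sigma(w_1)$ at $(g_1,\dots,g_n)$ is the same as evaluating $w_1$ at $(\sigma(x_1),\dots,\sigma(x_n))$ evaluated at $(g_1,\dots,g_n)$, which is $w_1$ evaluated at $\sigma^*(g_1,\dots,g_n)$. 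Hence $\Imm \w_2 = \w_1(\sigma^*(G^n))$. Thus it is enough to check that $\sigma^* \colon G^n \to G^n$ is surjective (in fact bijective) for each generator $\sigma$. This is immediate: a permutation induces a permutation of the coordinates of $G^n$; the inversion $x_i \mapsto x_i^{-1}$ induces $(g_1,\dots,g_i,\dots,g_n)\mapsto(g_1,\dots,g_i^{-1},\dots,g_n)$, which is a bijection; and the transvection $x_i \mapsto x_i x_j$ induces $(g_1,\dots,g_n)\mapsto(g_1,\dots,g_{i-1},g_i g_j,g_{i+1},\dots,g_n)$, which is a bijection of $G^n$ (its inverse corresponds to $x_i\mapsto x_i x_j^{-1}$). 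Since $\sigma^*$ is onto, $\w_1(\sigma^*(G^n)) = \w_1(G^n) = \Imm \w_1$, giving $\Imm \w_2 = \Imm \w_1$.

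There is no serious obstacle here; the only point requiring a little care is the bookkeeping in the identity $\w_{\sigma(w)} = \w_w \circ \sigma^*$, i.e.\ checking that substitution of group elements commutes appropriately with substitution of words — this is a routine verification using that evaluation $F_n \to G$, $x_i \mapsto g_i$, is a group homomorphism, so it respects the composition $F_n \xrightarrow{\sigma} F_n \xrightarrow{x_i \mapsto g_i} G$. One should also note that $\sigma^*$ depends on $G$ and on the chosen expressions for $\sigma(x_i)$, but this causes no trouble since we only need its surjectivity. Finally, one could bypass the reduction to Nielsen generators and argue directly that for any $\sigma \in \Aut(F_n)$ the induced map $\sigma^* \colon G^n \to G^n$ is a bijection (with inverse $(\sigma^{-1})^*$), but invoking the Nielsen generating set makes the surjectivity of $\sigma^*$ completely transparent and keeps the argument elementary.
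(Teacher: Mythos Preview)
Your proof is correct, but the paper takes a shorter, more conceptual route. The paper observes that $\Imm\w = \{\varphi(w) : \varphi\in\Hom(F_n,G)\}$, since homomorphisms $F_n\to G$ are in bijection with $n$-tuples in $G^n$; this set is then manifestly invariant under $\Aut(F_n)$, because precomposition with an automorphism permutes $\Hom(F_n,G)$. Your reduction to Nielsen generators works perfectly well but is unnecessary: the remark you make at the end---that for any $\sigma\in\Aut(F_n)$ the induced map $\sigma^*$ on $G^n$ is a bijection with inverse $(\sigma^{-1})^*$---is already the whole argument, and is essentially the paper's proof phrased in terms of tuples rather than homomorphisms. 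The Nielsen decomposition buys explicitness but not simplicity here.
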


\begin{proof}
Indeed, any group homomorphism $\varphi\colon F_n\to G$ is determined by
the $n$-tuple $(g_1=\varphi (x_1),\dots , g_n=\varphi (x_n))$. Since for any  $w\in F_n$ we have $\varphi(w)=\w(g_1,\dots ,g_n)$, the image of $\w$ coincides with the
set $\{\varphi(w)\}_{\varphi\in\Hom(F_n,G)}$, whence the result.
 \end{proof}

\begin{remark}
Not too much is known about the invariance of $\Imm \w$ with respect to other operations on $F_n$ and $G$.
It would be interesting to divide words into equivalence classes with respect to certain invariance properties of  $\Imm \w$ for a given group $G$.
\end{remark}

\subsection{Semisimple  algebraic groups}
Consider the case where  $G$ is a {\it semisimple} algebraic group.

The basic point for the investigation of word maps on semisimple algebraic groups is the following theorem of A.~Borel \cite{Bo2}.

\bigskip

{\noindent {\bf Theorem A.} {\it Let $G$ be a connected semisimple algebraic group, and let $1\ne w \in F_n$. Then the map
$\w\colon  G^n\rightarrow  G$
is dominant.}}

\begin{cor}
\label{cor2}
Let $w_1\in F_{n_1}, \dots, w_k\in F_{n_k}, k>1,$ be words without common letters, let $w = w_1w_2\dots w_k$, and let
$\w \colon G^{\sum_i n_i}\rightarrow G$
be the corresponding word map. Then $\Imm\, \w = G$.
\end{cor}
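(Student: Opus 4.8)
The plan is to use Theorem A together with the observation that words without common letters act on independent sets of variables, so their word maps can be composed freely. First I would set $N = \sum_i n_i$ and think of $G^N$ as the product $G^{n_1}\times\cdots\times G^{n_k}$, with the $j$-th factor carrying the variables occurring in $w_j$. Since the $w_j$ have no letters in common, for a tuple $(\mathbf g^{(1)},\dots,\mathbf g^{(k)})\in G^N$ we have the factorization
\begin{equation*}
\w(\mathbf g^{(1)},\dots,\mathbf g^{(k)}) = \w_1(\mathbf g^{(1)})\,\w_2(\mathbf g^{(2)})\cdots \w_k(\mathbf g^{(k)}),
\end{equation*}
so that $\Imm\w = (\Imm\w_1)(\Imm\w_2)\cdots(\Imm\w_k)$ as a product of subsets of $G$.

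Next I would invoke Theorem A: since each $w_j\neq 1$, each map $\w_j\colon G^{n_j}\to G$ is dominant, hence $\Imm\w_j$ contains a dense open subset $U_j\subseteq G$. The key step is then the standard fact that in a connected algebraic group $G$, the product $U_1 U_2$ of two dense open (or merely dense constructible) subsets is all of $G$: for any $a\in G$, the set $U_1$ and the set $a U_2^{-1}$ are both dense open in the irreducible variety $G$, hence meet, which yields $u_1 = a u_2^{-1}$, i.e. $a = u_1 u_2 \in U_1 U_2$. Applying this with $k>1$ (using $U_1$ and $U_2$, and absorbing the remaining factors into the ambient $G$, or iterating) gives $\Imm\w \supseteq U_1 U_2 = G$, hence $\Imm\w = G$.

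The only genuinely delicate point is making sure the density argument applies: one needs $G$ to be connected (so that it is irreducible as a variety and two dense open subsets necessarily intersect), and one needs the images $\Imm\w_j$ to contain dense open sets rather than merely being dense. The latter follows because $\w_j$ is a morphism of varieties, so its image is constructible (Chevalley) and, being dense, contains a dense open subset of $G$. I expect this to be the main obstacle only in the sense of bookkeeping — it is the place where one must be careful to cite the right general facts (Chevalley's theorem on constructible sets, and irreducibility of connected algebraic groups) rather than a place requiring a new idea. Everything else is the elementary translation via ``no common letters'' and the one-line $U_1 U_2 = G$ computation.
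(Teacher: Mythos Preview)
Your proposal is correct and follows essentially the same route as the paper: factor $\Imm\w$ as the product $\Imm\w_1\cdots\Imm\w_k$, use Theorem~A to find a nonempty open $U_i\subset\Imm\w_i$, and conclude via $U_1U_2=G$. The only difference is cosmetic---you spell out the $a\,U_2^{-1}$ argument and invoke Chevalley explicitly, whereas the paper simply cites \cite[Ch.~I, Prop.~1.3]{Bo1} for the fact that the product of two nonempty open subsets of a connected algebraic group is the whole group.
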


\begin{proof}
Since $w_1, \dots, w_k$ are words without common letters, $\Imm \,w = \Imm \,w_1\Imm w_2 \cdots \Imm w_k$. Theorem A implies
that for every $i$ there is a non-empty open subset $U_i $ of $G$ which is contained in $\Imm\, w_i$. Hence $U_1U_2 \cdots U_k \subset \Imm \,w$. But
the product of any two non-empty open subsets of a linear algebraic group coincides with the whole group  (see \cite[Ch.~I, Prop.~1.3]{Bo1}).
\end{proof}

Let $w \in F_n$, and let $\w\colon  G^n\rightarrow G$ be the word map of semisimple algebraic groups.
Then we may view the words $w_1 = w(x^{(1)}_1, \dots , x^{(1)}_n), \dots , w_k = w(x^{(k)}_1, \dots ,x^{(k)}_n)$
as words in different variables. Applying Corollary \ref{cor2}, we get

\begin{cor}
If $k>1$, then
$(\Imm \w)^k = G$.
\end{cor}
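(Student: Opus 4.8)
The plan is to deduce the statement directly from Corollary~\ref{cor2}. The words $w_1,\dots,w_k$ are copies of $w$ written in pairwise disjoint sets of variables; in particular they have no common letters, and each $w_i$ is obtained from $w$ by a variable-renaming automorphism of a free group, so by Proposition~\ref{pr1.1} one has $\Imm\,\w_i=\Imm\,\w$ for every $i$. First I would record the elementary identity
\[
\Imm(w_1w_2\cdots w_k)=\Imm\,\w_1\cdot\Imm\,\w_2\cdots\Imm\,\w_k=(\Imm\,\w)^k ,
\]
which holds precisely because the $w_i$ involve disjoint variables: an arbitrary choice of evaluations of $w_1,\dots,w_k$ can be realized simultaneously by a single tuple in $G^{\sum_i n_i}$.

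Assuming $k>1$ (and $w\neq 1$, the excluded case $w=1$ being degenerate), Corollary~\ref{cor2} applies verbatim to the product $w_1\cdots w_k$ and yields $\Imm(w_1\cdots w_k)=G$. Comparing this with the displayed identity gives $(\Imm\,\w)^k=G$, as claimed. For completeness, recall the mechanism behind Corollary~\ref{cor2}: by Theorem~A each $\Imm\,\w_i$ is dense in $G$ and hence contains a nonempty Zariski-open subset $U_i\subseteq G$, and by \cite[Ch.~I, Prop.~1.3]{Bo1} already $U_1U_2=G$; a fortiori $U_1U_2\cdots U_k=G$ as soon as $k\ge 2$.

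There is essentially no obstacle here, since the assertion is a reformulation of Corollary~\ref{cor2}. The only point meriting a moment's care is the role of the hypothesis $k>1$: it is exactly what upgrades the conclusion from density (a single open set $U_i$) to surjectivity (the product $U_1U_2$ of two open sets being all of $G$); for $k=1$ the equality $\Imm\,\w=G$ is false in general.
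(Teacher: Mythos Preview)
Your proof is correct and follows exactly the paper's approach: define $w_i$ as copies of $w$ in disjoint variables and apply Corollary~\ref{cor2} to the product $w_1\cdots w_k$, noting that $\Imm\,\w_i=\Imm\,\w$. The paper omits the explicit justification via Proposition~\ref{pr1.1} and the caveat $w\ne 1$, but these are implicit in the context.
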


\section{Word maps on perfect algebraic groups} \label{sec:perf}
It would be interesting to extend Theorem A to a wider class of algebraic groups. A natural step would be
to assume $G = [G, G]$ to be a {\it perfect} group. However, this works only in particular cases which we are going to describe.

Throughout this section, $\G$ is a connected perfect linear algebraic group defined over an algebraically closed field $K$
of characteristic zero and $G = \G(K)$.
(Recall that we  identify $\G$ with $G$, see Introduction.)

\subsection{General observations}
Put $U = R_u(G)$. Then $G/U$ is a semisimple algebraic $K$-group 
\cite[11.21]{Bo1} and by Mostow's Theorem \cite{Mo} (see, e.g., \cite[Th.~VIII.4.3]{Ho},
\cite[Prop.~5.4.1]{Co} for modern exposition), there exists a closed linear algebraic
subgroup $H$ of $G$ (called a Levi subgroup) isomorphic to $G/U$. (Equivalently, $G=HU$ is a semidirect product.) All Levi subgroups
are conjugate. We fix one of them and denote by $H$ throughout below.

Let
$$U_1 = U,    \, U_2 = [U, U_1], \dots , U_i = [U,  U_{i-1}],\dots ,U_{r+1} =\{1\}
$$
be the lower central series of $U$, and let
$V_i = U_i/U_{i+1}$ denote its quotients.
Then we may view $V_i$ as a $K[H]$-module (indeed, the action of $H$ on $V_i$
induced by conjugation of $U$ by elements of $G$ is $K$-linear because $\ch \,K = 0$).

\begin{definition}
We say that a $K[H]$-module $M$ is {\em augmentative} if it has no $K[H]$-quotients $M/M^\prime$ on which
$H$ acts trivially.
\end{definition}

If $G$ is a perfect group, $V_1$ is an augmentative $K[H]$-module \cite{GS}, \cite{Gor3}.

\begin{definition}
We say that $G$ is a {\em firm} perfect group if $V_i$ is an augmentative $K[H]$-module for every $i$.
\end{definition}

\begin{remark}
Note that if the nilpotency class of $U$ is equal to one, that is, if $U$ is an abelian group, then any perfect group $G$ is firm.
\end{remark}

\begin{definition}
We say that $G$ is a {\it strictly firm} perfect group if $V_i^T = \{0\}$ for every $i$ where $T$ is a maximal torus of $G$.
\end{definition}

\begin{example}
Let $G$ be a simple classical algebraic group (that is of type $A_r$, $B_r$, $C_r$, or $D_r$),
and let $\Pi = \{\a_1, \dots, \a_r\}$ be its standard simple root system (with the notation of \cite[Planches]{Bou}).
Further, let $P_k = P_X$ be the standard parabolic subgroup of $G$ which corresponds to the set of simple roots $X =\Pi \setminus \{\a_k\}$.
For the case $A_r$, $r > 1$ , the group $[P_k, P_k]$ is a strictly firm perfect group for every $k$.
For all remaining cases, the same is true for $r > 2$ and $k > 2$. 
Indeed, it is enough to show that there are no positive roots $\beta$ orthogonal to every root $\alpha_i \in X$.
This follows, in its turn, from the following observation: since $k > 2$, such a root $\beta$ is orthogonal
to $\a_1 = \e_1 -\e_2$ and $\a_2 = \e_2-\e_3$, and therefore $\beta$ is either $\e_i\pm \e_j$, or $\e_i$, or $2\e_i$,
where $3 < i \leq k, j >i$. But then $\beta$ is not orthogonal to $\a_{i-1} = \e_{i-1}-\e_i$.
\end{example}

\begin{theorem} \label{th1.1}
Let $G$ be a connected perfect algebraic group defined over an algebraically closed field $K$ of characteristic zero.  Then
\begin{itemize}
\item[(i)] if $G$ is strictly firm, then for any $1\ne w \in F_n$  the map
$\w\colon G^n\rightarrow G$ is dominant;
\item[(ii)] if $G$ is firm, then for any $w = w_1(x_1, \dots, x_n)w_2(y_1, \dots, y_k)\in F_{n+k}$, $w_1, w_2 \ne 1$,
the word map $\w\colon G^{n+k}\rightarrow G$ is dominant.
\end{itemize}
\end{theorem}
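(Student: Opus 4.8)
The plan is to reduce the theorem to Borel's Theorem A applied to the semisimple quotient $H \cong G/U$, and then to "lift" a dominant word map from $H$ back up to $G$ by exploiting the augmentative structure of the successive quotients $V_i = U_i/U_{i+1}$. Write $\pi\colon G \to G/U \cong H$ for the projection. Since $H$ is semisimple and $w \ne 1$, Theorem A tells us $\w$ on $H$ is dominant. The composite $G^n \xrightarrow{\w} G \xrightarrow{\pi} H$ equals the word map on $H$ evaluated on the images of the coordinates, so the image of $\w$ on $G$ already surjects onto a dense subset of $H$. What remains is to show that, starting from suitable points whose images in $H$ are generic, one can adjust the unipotent coordinates so that the $U$-component of $w(g_1,\dots,g_n)$ sweeps out a dense subset of a generic fibre of $\pi$. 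The natural way to organize this is by induction on the nilpotency class $r$ of $U$, using the filtration $G \supset GU_2 \supset GU_3 \supset \cdots$; at each stage one works modulo $U_{i+1}$ and tries to control the $V_i$-component.

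For part (ii), which is the cleaner case, I would split the variables: fix the $x$-tuple so that $w_1$ evaluates to a generic element $h \in H$ (possible by Theorem A on $H$, up to density), and then vary the $y$-tuple. Here the key point is a derivative/linearization computation: if we plug in $y_j = h_j u_j$ with $h_j \in H$ generic and $u_j \in U$ small, then modulo $U_2$ the $V_1$-component of $w_2(y_1,\dots,y_k)$ depends on the $u_j$ through an $H$-equivariant linear map whose image, because $V_1$ is augmentative (this is the cited fact from \cite{GS}, \cite{Gor3}, valid since $G$ is perfect), is not contained in any proper $H$-submodule and in particular is not annihilated by a generic $h$ acting by conjugation — so the product $w_1 w_2 = h \cdot w_2$ can be moved in the $V_1$-direction. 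Then one iterates modulo $U_3, U_4, \ldots$: at stage $i$ the augmentativeness of $V_i$ (this is exactly the "firm" hypothesis) supplies the needed non-degeneracy so that the $V_i$-component can be adjusted while the already-achieved genericity in $H, V_1, \dots, V_{i-1}$ is preserved. Counting dimensions along the filtration, the image closure must be all of $G$.

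For part (i), the strictly firm hypothesis $V_i^T = \{0\}$ (no nonzero $T$-fixed vectors, $T$ a maximal torus) is what lets us run the single-word argument without splitting variables. The idea is: by Theorem A we may assume $w$ evaluates, on some tuple, to a regular semisimple element, which we may take to lie in $T \subset H$; now perturb the tuple within $G$. The condition $V_i^T = 0$ means that conjugation by a generic (hence regular semisimple) element of $H$ has no eigenvalue $1$ on $V_i$, so the map $v \mapsto v - t\cdot v$ (equivalently $u \mapsto u (t u t^{-1})^{-1}$ at the level of $U_i/U_{i+1}$) is invertible on $V_i$; this invertibility is precisely the infinitesimal surjectivity needed to fill out the fibre of $\pi$ over $t$, again proceeding up the lower central filtration step by step. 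Combined with the dominance onto $H$, this gives dominance onto $G$.

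The main obstacle — and the step deserving the most care — is the inductive passage modulo $U_{i+1}$: one must verify that the relevant map sending the perturbation of the unipotent parts of the arguments to the $V_i$-component of $w$ is, after fixing generic semisimple data, a surjective (or at least dominant) map onto $V_i$, and that this can be arranged \emph{simultaneously} with the genericity already secured at lower levels, i.e.\ the perturbations at level $i$ can be chosen not to disturb levels $< i$. This requires writing out the "$w$-derivative" in the associated graded of $U$ and checking it is governed by an $H$-module homomorphism whose image generates $V_i$ — here the augmentative/strictly-firm hypothesis enters decisively, and a non-perfect or non-firm group is exactly where the argument breaks, since then some $V_i$ could have a trivial $H$-quotient (resp.\ a $T$-fixed vector) that the word map cannot reach. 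I would expect the bookkeeping of which variables to perturb, and the verification that higher commutators $U_j$ with $j > i$ contribute only negligible corrections, to be the technically fussiest part.
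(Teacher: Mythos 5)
Your part (i) is essentially the paper's argument and is sound: realize a generic $t\in T$ in $\Imm\,\w$ via Borel's theorem applied to $H$, conjugate the entire argument tuple by $u\in U$ to obtain $t\,[t^{-1},u]$, note that strict firmness makes $1-\mathrm{Ad}(t)$ invertible on every $V_i$ for $t$ in a dense open subset of $T$, and climb the lower central series of $U$ (the last step is exactly the paper's Lemma \ref{lem1.0}, whose conditions (I), (II) formalize your remark that higher terms $U_j$, $j>i$, only contribute corrections that can be killed at the next stage).

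Part (ii), however, has a genuine gap at precisely the point you single out as ``the key point''. Augmentativity of $V_i$ (equivalently, since $\ch K=0$ and $H$ is semisimple, $V_i^H=0$) does \emph{not} imply that the linearized map sending your perturbations $u_j$ to the $V_i$-component of the word value is onto $V_i$. That map is not $H$-equivariant (it depends on the chosen $h_j$), and its image is a sum of subspaces of the form $[s,V_i]$ for specific elements $s\in H$ built from the $h_j$; each such subspace is proper whenever $s$ fixes a nonzero vector of $V_i$, which under mere firmness (as opposed to strict firmness) cannot be excluded --- this is exactly the difference between hypotheses (i) and (ii), and the reason (ii) is restricted to products $w_1w_2$ in disjoint variables. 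Your sketch extracts perturbation directions only from the single word $w_2$ (you fix the $x$-block and vary the $y$-block), so it can prove no more than part (i) does and would again require strict firmness. The paper instead conjugates the $w_1$-block by $u$ and the $w_2$-block by an independent $u'$, producing on $V_i$ the sum $[t'^{-1}Tt',V_i]+[T',V_i]$ for two maximal tori $T,T'$ with $\overline{\langle T,T'\rangle}=H$; and even then the claim that this sum is all of $V_i$ is not formal. Two proper subspaces whose ``fixed-vector complements'' intersect trivially need not span. The paper proves it by first reducing to $K=\C$ (Lemma \ref{lem1.5}), passing to a maximal compact subgroup, and using an invariant positive-definite hermitian form to identify $[T_c,V_i]^{\bot}$ with $V_i^{T_c}$, so that the orthogonal complement of the sum is $V_i^{T}\cap V_i^{T'}=V_i^{H}=0$. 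The two missing ingredients in your proposal are therefore: (a) using \emph{both} variable blocks to obtain two independent families of perturbation directions attached to two tori that together generate $H$, and (b) an actual argument (positivity over $\C$, or a substitute) showing that the resulting sum of commutator subspaces exhausts $V_i$ when $V_i^H=0$.
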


\begin{proof}
$\,$

(i) Let $\d\colon U^m \rightarrow U$ be a map such that for every $i$

\begin{itemize}
\item[(I)] $\d(U_i^m)\subset U_i$;
\item[(II)] for any two $m$-tuples $(u_1, u_2, \dots, u_m) \in U_i^m$ and $(u^\prime_1, u^\prime_2, \dots, u^\prime_m) \in U^m_j, j > i $ we have
\end{itemize}
$$
\d(u_1u_1^\prime, u_2u_2^\prime, \dots, u_mu_m^\prime) \equiv \d(u_1, u_2, \dots, u_m) \d(u_1^\prime, u_2^\prime, \dots, u_m^\prime)  \pmod{U_{j+1}}.
$$
Then we may consider the induced maps $\d_i\colon V_i^m\rightarrow V_i$ given by
$$\d_i (v_1, \dots, v_m) \equiv  \d(u_1, u_2, \dots, u_m) \pmod{U_{i+1}}$$
where $(u_1, u_2, \dots, u_m)$ is an $m$-tuple of preimages of $(v_1, \dots, v_m)\in V^m_i = (U_i/U_{i+1})^m$ in $U_i^m$.

\begin{lemma}
\label{lem1.0}
Let $\d\colon U^m \rightarrow U$  be a map satisfying conditions (I), (II).
If $\d_i(V^m_i) = V_i$ for every $i$, then
$\d(U^m) = U$.
\end{lemma}

\begin{proof}
Let $u \in U_i\setminus U_{i+1}$. Since $\d_i(V^m_i) = V_i$, we have $(u_1, u_2, \dots, u_m) \in U_i^m$ such that $\d(u_1, u_2, \dots, u_m) = uu^\prime$ where $u^\prime \in U_{i+1}$. Let $u^\prime \in U_j\setminus U_{j+1}$ for some $j>i$. Then there is $(u^\prime_1, u^\prime_2, \dots, u^\prime_m) \in U^m_j$ such that $\d(u_1^\prime, u_2^\prime, \dots, u_m^\prime) \equiv u^{\prime -1} \pmod{ U_{j+1}}$. Condition (II) implies that
$\d(u_1u_1^\prime, u_2u_2^\prime, \dots, u_mu_m^\prime) = uu^{\prime\prime}$ where $u^{\prime\prime} \in U_{j+1}$.
Acting this way, we can find a preimage of $u$ in $U_i^m$.
\end{proof}

Since $G$ is a strictly firm perfect group, there exists a non-empty open subset $X\subset T$ such that $[t^{-1},V_i] = V_i$
for every $t\in X$ and every $i$ because $V_i$ is a finite-dimensional $K[H]$-module.

Let $\w_H\colon  H^n\rightarrow H$ be the map corresponding to the same word $w$. Then $\w_H$ is dominant according to the Borel Theorem, and therefore
there exists an open subset $Y\subset T$ such that every element $t \in Y$ has a non-empty preimage $\w_H^{-1}(t)$.

Let $t \in X\cap Y$. Then

\begin{itemize}
\item[(a)] there is an $n$-tuple $(h_1, \dots, h_n) \in H^n$ such that $\w_H(h_1, \dots, h_n) = t$;
\item[(b)] $[t^{-1},V_i] = V_i$ for every $i$.
\end{itemize}

We have $t\in \Imm \w$  because $G$ contains $H$.
Then for every $u\in U$ we have $u t u^{-1}\in \Imm\,\w$. Thus, $t\,[ t^{-1}, u] \in \Imm\w$ for every $u$.
Since $[t^{-1}, V_i] = V_i$ for every $i$, condition (b) and Lemma \ref{lem1.0}
(for $m = 1$ and $\d (u)=[t^{-1},u]$) imply that for every $u^\prime \in U$ there exists $u \in U$ such that $[ t^{-1}, u] = u^\prime$.
Hence $t U \in \Imm\,\w$ for every $t$ belonging to a non-empty open subset $X\cap Y$. Then $\Imm\w$ is dense in $G$.

\bigskip

(ii) We need the following
\begin{lemma}
\label{lem1.5}
If statement (ii) holds for $K = \C$, then it holds for every algebraically closed field $K$ of characteristic zero.
\end{lemma}

\begin{proof}
Since $\ch K = 0$, there exists an extension $F/K$ such that $\C \subset F$. Note that $G = \G(K)$ and $\G(\C)$ are dense subgroups of  $\G(F)$ \cite[18.3]{Bo1}. Hence if we prove that
the image of $\w\colon \G(\C)^{n+k}\rightarrow \G(\C)$ is dense, we also get the density of the image of the map  $\w\colon \G(K)^{n+k}\rightarrow \G(K)$.
\end{proof}

Now we assume $G = \G(\C)$.  We denote by $H_c$ a maximal compact  subgroup of $H$.  It is a real compact Lie group. Consider the action of $H_c$ on
$V_i$ for a fixed $i$.
Since $G$ is a firm group, the natural representation $\mathfrak{i}: H_c\rightarrow \GL(V_i)$ is not trivial, and therefore the image $\mathfrak{i}(H_c)$ is a non-trivial compact subgroup of $\SL(V_i)$ (recall that $\G$ is a connected perfect group). To ease the notation, we will identify the group $\mathfrak{i}(H_c)\leq \SL(V_i)$ with $H_c$.
 We may consider a positive-definite hermitian form $\phi$ on $V_i$ and the group $\SU(V_i)$, which is a maximal compact subgroup of $\SL(V_i)$.
We may and will assume $H_c \leq \SU(V_i)$.

Recall that by Mostow's Theorem, $G$ is a semidirect product $HU$ where $H \leq G$ is a (fixed) Levi subgroup of $G$.
The words $w_1 ,\, w_2$ induce dominant maps $\w_1 \colon H^n\to H,\,\,\,\w_2\colon H^k\rightarrow H$.
By the same arguments as above, we can now get a non-empty open set $X\cap Y\subset T$ such that for every $t \in X\cap Y$ the following properties hold:

\begin{itemize}
\item[(a)] there is an $n$-tuple $(h_1, \dots, h_n) \in H^n$ (respectively, $(h_1, \dots, h_k) \in H^k$) such that

$\w_1(h_1, \dots, h_n)  = t$
(respectively,  $\w_2(h_1, \dots, h_k) = t$);
\item[(c)] $[t^{-1},V_i] = [T, V_i]$ for every $i$.
\end{itemize}

Further, there exists a maximal torus $T^\prime\leq H$ such that $\overline{\left<T, T^\prime\right>} = H$.

Indeed, there are only finitely many  connected  proper closed subgroups $\{\Gamma_i\}$ of $H$ which contain a given
maximal torus $T$ \cite[9.4]{Bo1}. Since the set of all semisimple elements is dense in $G$, there is a maximal torus $T^\prime\leq H$ such that
$T^\prime \nsubseteq \Gamma_i$ for every $i$. Let $\Delta = \overline{\left<T, T^\prime\right>}$, then the identity component $\Delta^0$ contains
$ T^\prime$. Then we have $\Delta^0 \ne \Gamma_i$ for every $i$ but $T \leq \Delta^0$. This implies $\Delta = \Delta^0 = H$.

So for every $t^\prime\in T^\prime$ we have $\overline{\left< t^{\prime-1}T t^\prime, T^\prime\right>} = H$. Hence there is an open set
$Z \subset H\times H$ such that if $(t, t^\prime) \in Z$, then
\begin{itemize}
\item[(1)] $t \in T, t^\prime\in T^\prime$, where $T, T^\prime$ is a pair of maximal tori of $H$ such that $\overline{\left< T, T^\prime\right>} = H$;
\item[(2)] there exist  $(h_1, \dots, h_n) \in H^n, \,\,\,(h^\prime_1, \dots, h^\prime_k) \in H^k$ such that
$$\w_1(h_1, \dots, h_n)= t,\,\,\w_2(h^\prime_1, \dots, h^\prime_k)= t^\prime;$$
\item[(3)] $[t^{-1},V_i] = [T, V_i], [t^{\prime-1},V_i] = [T^\prime, V_i]$ for every $i$.
\end{itemize}

Indeed, obviously, a general pair in $H\times H$ satisfies condition (1). Conditions (2) and (3) follow from (a) and (c).

Since $H_c$ is Zariski dense in $H$, we may assume
that maximal compact tori $T_c\leq T, T_c^\prime \leq T^\prime$ are contained in $H_c$ and there is a Zariski dense subset
$Z_c \subset Z , Z_c \subset H_c\times H_c$ such that for every $(t, t^\prime)\in Z_c$
conditions (1), (2), (3) are satisfied with $T$ replaced by $T_c$ and $T^\prime$ replaced by $T^\prime_c$.

Let now $u, u^\prime \in U_i$. Then
\begin{equation}
\w (u h_1u^{-1}, \dots, u h_nu^{-1}; v^\prime h^\prime_1 u^{\prime-1}, \dots, u^\prime  h^\prime_k u^{ \prime -1}) = ut u^{-1}u^\prime t^\prime u^{\prime -1} = t [t^{-1}, u] t^\prime [t^{\prime -1}, u^\prime] = $$
$$t t^\prime ( t^{\prime -1}[t^{-1}, u] t^\prime )  [t^{\prime -1}, u^\prime] = tt^\prime  ( [t^{\prime -1}t^{-1} t^\prime , t^{\prime -1} u t^\prime]  ) ( [t^{\prime -1}, u^\prime])\in \Imm\, \w.
\label{2.1}
\end{equation}
Consider the map
$$\d\colon U\times U\rightarrow U$$
given by
$\d(u, u^\prime)=[t^{\prime -1}t^{-1} t^\prime , t^{\prime -1} u t^\prime]   [t^{\prime -1}, u^\prime]$. Obviously, the map $\d$ satisfies condition (I).
Let us check  (II). We have the identity $[a, bc] = [a, b] b[a, c]b^{-1}$ that becomes $[a, bc] = [a, b] [a, c]$ in the case when $b$ commutes with $[a, c]$. Further, let $u, u^\prime \in U_i,\,\,\,v, v^\prime \in U_j$ where $j >i$, and let $\bar{u}, \bar{u}^\prime, \bar{v}, \bar{v}^\prime $ be the images of $u, u^\prime \,v, v^\prime $ in $U/U_{j+1}$. Note that $\bar{v}, \bar{v}^\prime$, as well as elements of the form $s\bar{v}s^{-1}, s\bar{v}^\prime s^{-1}, [s,\bar{v}], [s,\bar{v}^\prime]$, belong to the centre of $U/U_{j+1}$. Then using the commutator identity $[a, bc] = [a, b] [a, c]$, we get property (II) for $\d$.
Then $\d$ induces bilinear maps
$\d_i  \colon V_i\times V_i \rightarrow V_i$
given by
$$\d_i(v, v^\prime) = (t^{\prime -1}t^{-1} t^\prime (v) - v) + (t^{\prime -1}( v^\prime) - v^\prime)$$
where $v, v^\prime \in V_i$ are the images of $ t^{\prime -1} u t^\prime, u^\prime \in U_i$ (here we change the multiplicative notation for the operation in $U_i$ to the additive notation for
the operation in $V_i = U_i/U_{i+1}$).

Since $\overline{\left< T_c, T^\prime_c\right>} = H$, we have
$\overline{\left< t^{\prime -1}T_c t^\prime, T^\prime_c\right>} = H$. Therefore, if
$t^{\prime -1}t^{-1} t^\prime (l) - l = 0\,\text{and}\,\,t^{\prime -1}(l) - l = 0$, then
condition (3) implies that $l \in V_i^H$.  Since $\ch K = 0$ and $G$ is a firm perfect group, $V_i^H = \{0\}$. Thus, we get
\begin{equation}
\begin{cases}t^{\prime -1}t^{-1} t^\prime (l) - l =0\\\,\,\, \text{and}\,\,\,\\t^{\prime -1}( l) - l = 0\end{cases} \Rightarrow l = 0.
\label{2.2}
\end{equation}
From (3) we have
\begin{equation}
\Imm\, \d_i = [t^{\prime -1} T t^\prime, V_i] + [T^\prime, V_i].
\label{2.3}
\end{equation}
On the other hand,
\begin{equation}
V_i^{t^{\prime -1}T_c t^\prime} = V_i^{t^{\prime -1}T t^\prime},\,V_i^{T^\prime_c } = V_i^{T^\prime }\,\,[t^{\prime -1}T t^\prime, V_i] =  [t^{\prime -1}T_ct^\prime , V_i], [T^\prime, V_i] = [T^\prime_c, V_i].
\label{2.4}
\end{equation}
If $l, l^\prime$ are vectors of the hermitian space $V_i$ such that $t(l) = \alpha l, \alpha \in \C, \alpha \ne 1$ and $t(l^\prime) = l^\prime$ for some $t\in T_c$,
then $\phi (l, l^\prime)  = 0$ (the same is true for the eigenvectors of the unitary transformation $t^\prime$).
The vector space $V_i^{t^{\prime -1}T_c t^\prime}$ consists of the zero weight vectors of $t^{\prime -1}T_c t^\prime$, the vector space  $ [t^{\prime -1}T_c t^\prime, V_i]$ is spanned by all non-zero weight vectors of $t^{\prime -1}T_c t^\prime$,
and the vector space $V_i^{t^{\prime -1}T_c t^\prime}$ is orthogonal to the vector space   $[t^{\prime -1}T_c t^\prime, V_i]$ . The same holds for the vector spaces $V_i^{T^\prime_c }= [T^\prime_c , V_i]$. Hence
\begin{equation}
V_i^{t^{\prime -1}T_c t^\prime} = [t^{\prime -1}T_c t^\prime, V_i]^\bot , \,\,\,\, V_i^{T^\prime_c } =  [T^\prime_c , V_i]^\bot
\label{2.5}
\end{equation}
where $X^\bot$ is the orthogonal complement to $X$.

Suppose that
\begin{equation}
L_i = [t^{\prime -1} T t^\prime, V_i] + [T^\prime, V_i]\ne V_i.
\label{2.6}
\end{equation}
Since $L_i$ is a subspace of the hermitian space $V_i$ with a positive-definite hermitian form $\phi$, we have
\begin{equation}
V_i = L_i + L_i^\bot.
\label{2.7}
\end{equation}
From \eqref{2.6} and \eqref{2.7} we conclude that $L_i^\bot\ne 0$.
But $$L_i^\bot\stackrel{\eqref{2.6}}{\leq}  [t^{\prime -1}T t^\prime, V_i]^\bot\cap  [T^\prime , V_i]^\bot \stackrel{\eqref{2.4}}{=} [t^{\prime -1}T_c t^\prime, V_i]^\bot\cap  [T_c , V_i]^\bot=$$$$\stackrel{\eqref{2.5}}{=} V_i^{t^{\prime -1}T_c t^\prime}\cap V_i^{T^\prime_c }\stackrel{\eqref{2.4}}{=} V_i^{t^{\prime -1}T t^\prime}\cap V_i^{T^\prime }\stackrel{\eqref{2.2}}{=} \{0\}.$$
This contradicts assumption \eqref{2.6}. Hence $L_i = [t^{\prime -1} T t^\prime, V_i] + [T^\prime, V_i] =  V_i$, and from \eqref{2.3} we get

\begin{equation}
\Imm \d_i = L_i = [t^{\prime -1} T t^\prime, V_i] + [T^\prime, V_i] =  V_i.
\label{2.8}
\end{equation}

Note that for chosen $t, t^\prime \in H_c$ we have equality \eqref{2.8} for every $i$. Hence Lemma \ref{lem1.0} (with $m = 2$) implies that the map
$\d\colon U\times U \rightarrow U$ given by $\d(u, u^\prime) =  [t^{\prime -1}t^{-1} t^\prime , t^{\prime -1} u t^\prime]  [t^{\prime -1}, u^\prime]$ is surjective.
Now condition (2) for the choice of $t, t^\prime$  and \eqref{2.1} imply that there is a dense subset $I\subset H_c$ such that
$I = \{tt^\prime \,\,\,\mid\,\,\,(t, t^\prime) \in Z_c\}$ and for every $s \in I$ we have $sU\subset \Imm \w$ (this follows from \eqref{2.1} and \eqref{2.8}).
Hence $IU \subset \Imm \w$ and $\overline{IU} = G$. Thus, $\w$ is a dominant map.
\end{proof}

\begin{cor}
Let $G$ be a perfect algebraic group over an algebraically closed field $K$ of characteristic zero.
\begin{itemize}
\item[(i)] If $G$ is strictly firm, then for any $1\ne w \in F_n$ we have $(\Imm \w)^2 = \G$.
\item[(ii)] If $G$ is firm, then for any $1\ne w \in F_n$ we have $(\Imm \w)^4 = \G$.
\end{itemize}
\end{cor}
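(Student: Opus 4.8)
The plan is to deduce the corollary directly from Theorem \ref{th1.1} together with the standard fact, already used in the proof of Corollary \ref{cor2}, that the product of two non-empty Zariski-open subsets of a connected linear algebraic group is the whole group \cite[Ch.~I, Prop.~1.3]{Bo1}. In both cases the dominance result produces a non-empty open subset of $G$ inside the relevant word image, and then multiplying a bounded number of such sets fills up $G$.

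For part (i): since $G$ is strictly firm, Theorem \ref{th1.1}(i) gives that $\w\colon G^n\to G$ is dominant, so $\Imm\w$ contains a non-empty open subset $U$ of $G$. Forming, as in the discussion preceding Corollary \ref{cor2}, the words $w^{(1)}=w(x_1^{(1)},\dots,x_n^{(1)})$ and $w^{(2)}=w(x_1^{(2)},\dots,x_n^{(2)})$ in disjoint sets of variables, one has $(\Imm\w)^2=\Imm\w^{(1)}\cdot\Imm\w^{(2)}\supseteq U\cdot U=G$, and the reverse inclusion is trivial, so $(\Imm\w)^2=G$.

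For part (ii): here $G$ is merely firm, so Theorem \ref{th1.1}(ii) applies not to a single word but to a word of the shape $w_1(x_1,\dots,x_n)w_2(y_1,\dots,y_k)$ with $w_1,w_2\ne 1$. Given $1\ne w\in F_n$, take two fresh copies $w^{(1)},w^{(2)}$ of $w$ in disjoint variables; then $w^{(1)}w^{(2)}$ is exactly of that shape, so by Theorem \ref{th1.1}(ii) the map $(\w)^{(1)}\cdot(\w)^{(2)}\colon G^{2n}\to G$ is dominant, i.e.\ $(\Imm\w)^2$ contains a non-empty open subset $U$ of $G$. Taking two further disjoint copies and applying the same argument a second time, $(\Imm\w)^4=\bigl((\Imm\w)^2\bigr)^2\supseteq U\cdot U=G$.

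I do not expect any genuine obstacle: the only points to be careful about are that $G$ is connected (so that \cite[Ch.~I, Prop.~1.3]{Bo1} applies) and that passing to extra copies of $w$ in independent variables does not change $\Imm\w$, which is immediate since all copies of $w$ define the same map $G^n\to G$ and the variable sets are disjoint. The one thing worth stating explicitly is that in (ii) one really needs the two-word form of Theorem \ref{th1.1}(ii) to get a single open set inside $(\Imm\w)^2$ in the firm (non-strictly-firm) case, which is why the exponent jumps from $2$ to $4$.
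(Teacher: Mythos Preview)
Your proposal is correct and follows exactly the approach the paper has in mind: the corollary is stated without proof because it is the direct analogue, for perfect groups, of the passage from Theorem~A to Corollary~\ref{cor2} and its consequence $(\Imm\w)^k=G$, with Theorem~\ref{th1.1} replacing Borel's theorem. Your handling of part~(ii) --- using two disjoint copies of $w$ to place $w^{(1)}w^{(2)}$ in the scope of Theorem~\ref{th1.1}(ii), thereby obtaining an open set inside $(\Imm\w)^2$, and then squaring once more --- is precisely the intended reason for the exponent~$4$.
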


\begin{remark}
It would be interesting to obtain analogues of (at least some of the) results of this section
for Lie polynomials on perfect Lie algebras, in the spirit of \cite{BGKP}.
\end{remark}

\subsection{Special cases}
In this section we consider more carefully some special word maps
$\w\colon G^n \rightarrow G,\,\,\,\w_H\colon H^n\rightarrow H$ on a connected perfect algebraic group $G$ and its quotient $H = G/U$.
Throughout this section we assume $\ch K = 0$.


Below we consider the linear action of $G$ (induced by conjugation) on vector spaces $V_i = U_i/U_{i+1}$.

\bigskip

Let $ \mathfrak h = (h_1, \dots, h_n) \in H^n$ be an $n$-tuple such that
$\w_H(h_1, \dots, h_n) = s\in H$. Let $(u_1,\dots, u_n) \in  U^n$. Then there is a map $\d_\h \colon U^n\rightarrow U$ such that
$$\w(\he_1u_1, \he_2 u_2, \dots, \he_n u_n) = \underbrace{\w(\he_1, \dots, \he_n)}_{= s \in H} \underbrace{\d_\h(u_1, \dots, u_n)}_{\in U}$$
(the functions $\d_\h(z_1, \dots, z_n)$ can be expressed by formulas containing the variables $z_i$ and the operators of conjugation by the elements $h_i^{\pm 1}$).
Thus, for a fixed $s \in H$ the set of all elements in $\Imm \w$ whose projection onto $H$ is equal to $s$ is the set
$$ \{ s \d_\h(u_1, \ldots, u_n)\,\,\,\mid \,\,\, (u_1, \dots, u_n)\in U^n,\,\,\,\h \in \w_H^{-1}(s)\}.$$
Then
\begin{equation}
\w\,\,\,\text{is a dominant map} \Leftrightarrow  \overline{\bigcup_{\h \in \w_H^{-1}(s)}\Imm\,\d_\h} = U\,\,\,\text{for every}\,\,\,s\,\,\,\text{from some open subset of}\,\,H,\label{2.9}
\end{equation}
\begin{equation}
\w\,\,\,\text{is a surjective  map} \Leftrightarrow  \bigcup_{\h \in \w_H^{-1}(s)}\Imm\,\d_\h = U\,\,\,\text{for every}\,\,\,s\in H. \label{2.10}
\end{equation}


\begin{example}
\label{ex1.1}
Let $w = x^m$, $h \in H$. Then $$w(\he u) = (\he u)^m = \he^m (\he^{-m+1}u\he^{m-1}) (\he^{-m+2}u \he^{m-2})\cdots (\he^{-m +(m-1)}u\he^{m- (m-1)}) u$$
and therefore
$$\d_\h(u) = (\he^{-m+1}u\he^{m-1}) (\he^{-m+2}u \he^{m-2})\cdots (\he^{-m +(m-1)}u\he^{m- (m-1)}) u.$$
\end{example}

\begin{example}
\label{ex1.2}
Let $w = [x, y]$, $\h = (h_1, h_2) \in H^2$. Then we have
$$[\he_1u_1, \he_2u_2] = \he_1u_1\he_2 u_2 u_1^{-1} \he_1^{-1} u_2^{-1} \he_2^{-1} = $$
$$= \he_1\he_2\he_1^{-1}\he_2^{-1} (\he_2h_1\he_2^{-1} u_1\he_2\he_1^{-1}\he_2^{-1})(\he_2\he_1 u_2u_1^{-1}\he_1^{-1}\he_2^{-1})(\he_2 u_2^{-1}\he_2^{-1})$$
and therefore
$$\d_{\h}(u_1, u_2) =  (\he_2\he_1\he_2^{-1} u_1\he_2\he_1^{-1}\he_2^{-1})(\he_2\he_1 u_2u_1^{-1}\he_1^{-1}\he_2^{-1})(\he_2 u_2^{-1}\he_2^{-1}).$$
\end{example}

\begin{theorem}
Let $G$ be a firm perfect algebraic group.
If $w \notin [F_n, F_n]$ and $\ch \,K = 0$,  then the map $\w\colon G^n\rightarrow G$ is dominant.
\end{theorem}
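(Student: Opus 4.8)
The plan is to reduce the statement to the "strictly firm" situation already handled in Theorem \ref{th1.1}(i), exploiting the extra hypothesis $w\notin[F_n,F_n]$. The point of that hypothesis is that $w$ maps into the abelianized free group nontrivially, so $w=x_1^{e_1}\cdots x_n^{e_n}\cdot c$ with $c\in[F_n,F_n]$ and some $e_j\neq 0$; composing with the commutator word in a single extra pair of variables, or simply restricting all but the $j$-th variable to suitable generic elements, one can force the word map to dominate along a one-parameter family inside a maximal torus. Concretely, I would first show that for a firm (not necessarily strictly firm) perfect $G$ the image of $\w_H\colon H^n\to H$ contains an open subset of a maximal torus $T$, and then analyze the fibre structure over such $t$ using the maps $\d_\h$ introduced in \eqref{2.9}.

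The key steps, in order. First, invoke Borel's Theorem A for the semisimple quotient $H=G/U$: since $w\neq 1$, $\w_H$ is dominant, so there is an open $Y\subset T$ of elements $t=\w_H(h_1,\dots,h_n)$. Second, use $w\notin[F_n,F_n]$ to gain control of the linear maps $\d_i$ on $V_i$: because $w$ has nonzero total exponent $e_j$ in some variable $x_j$, the differential of $\d_\h$ at the identity, restricted to perturbing only the $j$-th argument $u_j$, computes (additively on $V_i$) as a Laurent polynomial in the operator $t$ of the shape $P(t)=\sum t^{a}$ whose "evaluation at the trivial character" equals $e_j\neq 0$; hence for $t$ in a nonempty open subset $X\subset T$ the operator $P(t)$ acting on $V_i$ has no nonzero fixed vectors lying in $V_i^{T}$-directions, and in fact $P(t)$ is invertible on the sum of all nontrivial weight spaces of $T$ in $V_i$. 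The firmness hypothesis $V_i^H=\{0\}$ is exactly what is needed to conclude that the trivial-weight space of $T$ in $V_i$ is already killed by the combination of $t$ acting in $\d_\h$ — this is the same mechanism as in the proof of Theorem \ref{th1.1}(i), and the argument \eqref{2.2}--\eqref{2.8} via the positive-definite hermitian form (after the reduction $K=\C$ of Lemma \ref{lem1.5} and passage to the maximal compact $H_c$) applies verbatim, now with a single torus $T$ in place of the pair $T,T'$. Third, for $t\in X\cap Y$ conclude that $\d_{\h,i}(V_i^m)=V_i$ for every $i$, apply Lemma \ref{lem1.0} to get $\Imm\,\d_\h=U$, hence $tU\subset\Imm\,\w$ for all $t$ in the nonempty open set $X\cap Y\subset T$, so $\overline{\Imm\,\w}\supseteq\overline{(X\cap Y)U}=G$.

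The main obstacle is the second step: verifying that the extra hypothesis $w\notin[F_n,F_n]$ really forces the induced linear maps $\d_i$ on each $V_i$ to be surjective, i.e.\ that the trivial weight space of $T$ inside $V_i$ does not obstruct. One must check that the "evaluation at the trivial character" of the relevant Laurent polynomial in $t$ is precisely the total exponent $e_j$ and hence nonzero, so that $\d_{\h,i}$ is injective — and therefore bijective — on $V_i^{T}$, while it is automatically surjective onto the span of nontrivial weight spaces because each factor $1-t^{a}$ (equivalently $[t^{-a},\,\cdot\,]$) is invertible there for generic $t$. Here one uses firmness, $V_i^H=\{0\}$, together with the orthogonality of zero and nonzero weight spaces under the hermitian form to patch these two pieces together, exactly as in \eqref{2.5}--\eqref{2.8}. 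Once this linear-algebra input is in hand, the rest is the by-now-standard filtration argument of Lemma \ref{lem1.0}.
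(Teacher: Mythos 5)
Your overall strategy --- exploit the nonzero exponent sum $e_j$ of some variable so that the induced linear maps on the quotients $V_i=U_i/U_{i+1}$ become surjective, with the exponent sum taking care of the part of $V_i$ where the torus acts trivially and genericity taking care of the rest --- is indeed the mechanism behind the paper's proof. But the execution has a genuine gap precisely at the step you flag as the main obstacle. You work with the full $n$-variable operator $\d_\h(i)$ and assert that its restriction to the span of the nontrivial weight spaces is ``automatically surjective because each factor $1-t^{a}$ is invertible there for generic $t$''. That operator is not a product of factors $1-t^{a}$: by Lemma \ref{lem1.1} it is a \emph{signed sum} of conjugation operators by words in $h_1,\dots,h_n$ (not powers of a single $t$), and on a weight space of weight $\chi\ne 0$ it acts by a signed sum of characters which can a priori cancel identically; the hypothesis $e_j\ne 0$ alone does not rule this out, and you give no argument. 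Likewise, the claim that the argument \eqref{2.2}--\eqref{2.8} ``applies verbatim with a single torus $T$ in place of the pair $T,T'$'' is false: with one torus, \eqref{2.2} only yields $l\in V_i^{T}$, not $l\in V_i^{H}=\{0\}$, and the entire purpose of the pair $T,T'$ with $\overline{\left<T,T'\right>}=H$ is to force the common zero-weight space down to $V_i^{H}$. Finally, firmness is not what controls the zero-weight space of $T$ (it says nothing about $V_i^{T}\supseteq V_i^{H}$); what controls it is $\ch K=0$, which makes multiplication by $e_j$ invertible. (A smaller slip: $(X\cap Y)U$ with $X\cap Y\subset T$ lies in the proper closed subgroup $TU$, so its closure is not $G$; one must also invoke conjugation-invariance of $\Imm\w$, or produce an open subset of $H$ rather than of $T$.)

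All of these difficulties disappear if you actually carry out the reduction you mention only in passing: substitute the identity for every variable except $x_j$. Then $\Imm\w$ contains the image of the single power map $x\mapsto x^{m}$ on $G$ with $m=e_j\ne 0$, and for $w=x^{m}$ the induced operator on each $V_i$ is exactly $1+h+\cdots+h^{m-1}$ (Example \ref{ex1.1} and Lemma \ref{lem1.1}). Its eigenvalues are $1+\alpha+\cdots+\alpha^{m-1}$ for eigenvalues $\alpha$ of $h$, which vanish only when $\alpha$ is a nontrivial $m$-th root of unity; in particular the value at $\alpha=1$ is $m\ne 0$ in characteristic zero. Hence the operator is invertible for all $h$ in a nonempty open subset $X\subset H$, Lemma \ref{lem1.0} gives $\d_\h(U)=U$, and $h^{m}U\subset\Imm\w$ for every $h\in X$; since the power map on the semisimple group $H$ is dominant by Theorem A, these cosets sweep out a dense subset of $G$. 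This is the paper's proof: no reduction to $\C$, no maximal compact subgroup, no hermitian form, and no two-torus argument is needed for this theorem.
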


\begin{proof}
The restriction of $\d_\h$ to $U_i$ induces the maps
$$\d_\h(i) \colon V_i = U_i/U_{i+1}\rightarrow V_i= U_i/U_{i+1}.$$

\begin{lemma}
\label{lem1.1}
For every $i$ the map  $\d_\h(i) \colon V_i = U_i/U_{i+1}\rightarrow V_i$  is $K$-linear and can be written as a sum of $K$-linear maps
$$\d_\h(i)  = \partial_\h(i1) +  \partial_\h(i2)+\cdots +  \partial_\h(in)$$
where $\partial_\h(ij)(v_1, \dots, v_n) = \d_h(i)(0, \dots, 0, v_j, 0, \dots, 0)$.
\end{lemma}

\begin{proof}
Let $(v_1, \ldots, v_n) \in V_i^n $, and let $u_i$ be a preimage of $v_i $ in $U_i$. We have
$$\d_\h(i) (v_1, \dots, v_n) \stackrel{def}{=}\d_\h(u_1,\ldots, u_n)\pmod{U_{i+1}}.$$
On the other hand, $\d_\h(u_1,\dots, u_n)$ is a product of elements of the form
$$h_{l_1}^{\pm 1} h_{l_2}^{\pm 1}\cdots h_{l_k}^{\pm 1} u_j h_{l_k}^{\mp 1}\cdots h_{l_2}^{\mp 1}h_{l_1}^{\mp1}.$$
Conjugation of elements of $U_i$ by $h   \,\,( h \in H)\,\,$ induces a $K$-linear map $h\colon V_i \rightarrow V_i$. Thus, we have
$$\d_\h(i) (v_1, \dots, v_n) = \sum_{j=1}^ n\underbrace{ ( \text{a sum of  elements of the form}\,\,h_{l_1}^{\pm 1} h_{l_2}^{\pm 1}\cdots h_{l_k}^{\pm 1}(v_j))}_{:= \partial_\h(ij) (v_1, \dots, v_n)}. $$
\end{proof}

Let now $w \notin [F_n, F_n]$. Then there is a variable $x_i$ such that $w(1, \dots, 1, x_i, 1, \dots, 1) = x_i^m$, $m\ne 0$.
Hence it is enough to prove the statement for the case $w = x^m$.  Example \ref{ex1.1} and Lemma \ref{lem1.1} show that for every $h \in H,\,\,\h = (h)$ and every $i$ we  have
$$\d_\h(i)(v) = h^{m-1}(v) + h^{m-2}(v) +\cdots + h(v) + v = (h^{m-1} + h^{m-2} +\cdots +h +1)(v).$$
The linear operator $(h^{m-1} + h^{m-2} +\cdots +h +1)$ on $V_i$ is not invertible only in the cases when the operator $h$  has eigenvalues $\alpha = \sqrt[m]{1}\ne 1$. Hence there is an open subset $X\subset H$ such that for every $h \in X$ and every $i$ the linear operator $(h^{m-1} + h^{m-2} +\cdots +h +1)$ is invertible.
Thus, for $h \in X$ we have $\d_\h(i)(V_i) = V_i$ for every $i$. Lemma \ref{lem1.0} implies $\d_\h(U) = U$.  Hence $s U \in \Imm \w$ for every $s \in X$, and therefore $XU \subset \Imm \w$. Thus, $\w$ is dominant.
\end{proof}

\begin{theorem}
\label{th1.2}
Let $G$ be a firm perfect algebraic group,  
and let $w = [x, y]$. Then the word map $\w \colon G^2\rightarrow G$ is dominant.
\end{theorem}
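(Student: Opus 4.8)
The plan is to follow the same strategy as in the proof of Theorem \ref{th1.1}(i), but instead of choosing a single generic element $t$ of the maximal torus $T$ one now has to work with a pair $(t,t')$, exactly as in the proof of part (ii) of Theorem \ref{th1.1}, because the commutator word $w=[x,y]$ maps $H^2\to H$ dominantly but the constructed auxiliary map $\d_\h$ is genuinely bilinear and one needs two ``independent'' torus directions to kill the invariants. So the first step is to invoke Borel's Theorem A for $H=G/U$ applied to the word $[x,y]$: there is an open $Y\subset H$, which we may intersect with $T$, whose elements $s$ lie in $\Imm\w_H$, i.e.\ each such $s$ has a nonempty preimage $\h=(h_1,h_2)\in H^2$ with $[h_1,h_2]=s$. (Here one may take $Y$ large: every $s$ in an open subset of $H$ is a commutator, by Borel.)

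Next, using Example \ref{ex1.2} and Lemma \ref{lem1.1}, write, for a given lift $\h=(h_1,h_2)$ of $s$, the induced linear maps $\d_\h(i)\colon V_i\to V_i$ explicitly in terms of the operators by which $h_1,h_2$ act on $V_i$; concretely $\d_\h(i)(v_1,v_2)=\partial_\h(i1)(v_1)+\partial_\h(i2)(v_2)$, and the two summands are (up to conjugation by fixed elements of $H$, which is harmless) of the shape $(\,h_2h_1h_2^{-1}-\mathrm{id}\,)$ acting on $v_1$ and $(\,h_2\cdot\text{something}-\mathrm{id}\,)$ on $v_2$. The point is to choose the pair $(h_1,h_2)$ so that the images of these two operators span $V_i$ for every $i$ simultaneously. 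As in Theorem \ref{th1.1}(ii), this is arranged by picking a pair of maximal tori $T,T'\le H$ with $\overline{\langle T,T'\rangle}=H$ (such a pair exists by \cite[9.4]{Bo1}, since $H$ contains only finitely many proper closed connected subgroups containing a fixed maximal torus, and semisimple elements are dense), taking $t\in T$, $t'\in T'$ generic, and then realizing $s=t$, $s'=t'$ (or their conjugates) as commutators via Theorem A; the argument \eqref{2.2}--\eqref{2.8}, i.e.\ the hermitian-orthogonality argument over $\C$ together with firmness ($V_i^H=\{0\}$), then shows that the sum of the two images is all of $V_i$. Lemma \ref{lem1.5} reduces to $K=\C$ so that the unitary/compact-torus machinery is available.

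Once $\d_\h(i)(V_i^2)=V_i$ for all $i$, Lemma \ref{lem1.0} (with $m=2$) gives $\d_\h(U^2)=U$, hence $sU\subset\Imm\w$ for all $s$ in the relevant dense subset of $H$, and therefore $\overline{\Imm\w}\supset\overline{(\text{dense in }H)\cdot U}=G$, i.e.\ $\w$ is dominant.

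I expect the main obstacle to be the bookkeeping that makes the ``two torus directions'' genuinely do their job for the specific bilinear map coming from $[x,y]$: one must verify that the two components $\partial_\h(i1)$ and $\partial_\h(i2)$ of $\d_\h(i)$, after conjugating the realizations $t,t'$ into the right position, have images $[t'^{-1}Tt',V_i]$ and $[T',V_i]$ respectively (up to the conjugations appearing in Example \ref{ex1.2}), so that the identity $[t'^{-1}Tt',V_i]+[T',V_i]=V_i$ proved via \eqref{2.2}--\eqref{2.7} applies verbatim. Modulo matching up these formulas with those in the proof of Theorem \ref{th1.1}(ii), the commutator case should in fact follow from essentially the same computation, since $[x,y]$ is precisely the word $w_1(x)w_2(y)$-type situation in disguise after one absorbs the leading $h_1h_2h_1^{-1}h_2^{-1}$ into $s$; the only real work is checking that the extra conjugations by $h_i$ in Example \ref{ex1.2} do not spoil the surjectivity of the induced bilinear map, which they do not because conjugation by a fixed element of $H$ is a linear automorphism of each $V_i$.
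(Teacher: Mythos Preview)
Your approach is essentially the paper's: compute $\d_\h(i)$ from Example~\ref{ex1.2}, rewrite it (after pulling out the linear automorphism $h_2$ and changing variables $v_1\mapsto h_1(v_1)$) as $(A-\id)(w_1)+(B-\id)(v_2)$ with $A=h_1h_2^{-1}h_1^{-1}$ and $B=h_1$, and then invoke the hermitian-orthogonality argument of Theorem~\ref{th1.1}(ii) to get surjectivity for $(h_1,h_2)$ in a dense subset $X\subset H^2$, whence $\d_\h(U^2)=U$ by Lemma~\ref{lem1.0} and $sU\subset\Imm\w$ for all $s\in\{[h_1,h_2]:(h_1,h_2)\in X\}$.

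One passage in your plan is confused, however, and would derail the argument if taken literally. You write ``realizing $s=t$, $s'=t'$ (or their conjugates) as commutators via Theorem~A''. There is no separate $s'$ here, and you do \emph{not} fix a target $s$ (or a pair $t,t'$) and then search for a preimage $(h_1,h_2)$ under the commutator map. The logic runs in the opposite direction: you choose $(h_1,h_2)$ directly---say $h_2$ generic in a maximal torus $T$ and $h_1$ generic in a second maximal torus $T'$ with $\overline{\langle T,T'\rangle}=H$---so that the operators $A=h_1h_2^{-1}h_1^{-1}$ and $B=h_1$ lie in conjugate maximal (compact) tori and the argument \eqref{2.2}--\eqref{2.8} applies verbatim. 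The element $s=[h_1,h_2]$ then ranges over a dense subset of $H$ automatically, because the set of admissible pairs is dense in $H^2$ and $\w_H$ is dominant. Borel's theorem is used only implicitly at this last step, not to ``realize $t,t'$ as commutators''. Once you make this correction, your argument coincides with the paper's.
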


\begin{proof}
Example \ref{ex1.2} shows that for every $i$, for every $\h = (h_1,h_2)\in H\times H$ and $(v_1, v_2) \in V_i = U_i/U_{i+1}$,
we have
$$\d_{\h}(i) (v_1, v_2) = h_2h_1h_2^{-1}( v_1) + h_2h_1(v_2)  -  h_2h_1(  v_1) - h_2 (v_2) = $$$$= h_2[ (h_1h_2^{-1}h_1^{-1}(h_1( v_1)) - h_1(v_1)) + (h_1(v_2) - v_2)].$$
The same arguments as in the proof of Theorem \ref{th1.1} show that for every $i$ we have $\d_{\h}(i) (V_i^2) = V_i$  for any pair $(h_1, h_2)$ belonging to a dense subset $X\subset H^2$,  and therefore $\d_\h(U^2) = U$.
Then the set  $Y = \{[h_1, h_2]\,\,\mid\,\,\,h_1, h_2 \in X\}$ is dense in $H$ and $sU\in \Imm\w$ for every $s \in Y$. This proves the statement.
\end{proof}

\begin{remark}
It would be interesting to investigate the extendability of the results
of this section to the case where $\ch K = p > 0$. There are (at least) two subtleties,
even when $K$ is algebraically closed:
first, the representation $G=HU$ as a semidirect product may not exist; second, the
action on $V=G/U$ may not be linear. See \cite{McN1}, \cite{McN2} for details.
\end{remark}

To finish with general considerations, let us mention the intriguing question on the
existence of eventual obstructions to the extendability of Borel's dominance theorem
to perfect groups.

\begin{question}
Do there exist a field $K$, a connected perfect $K$-group $\G$ and a non-identity word
$w\in F_n$ such that the word map $\w\colon (\G (K))^n\to \G(K)$ is not dominant?
\end{question}

\section{Surjectivity of special word maps} \label{sec:words}

For certain words we know more than in general case, particularly,
regarding the surjectivity of the corresponding word map. In this section
we give a brief overview of the power words and Engel words.

\subsection{The power map $w = x^m$}
In the case of semisimple algebraic groups we have a complete answer.

\bigskip

{\noindent {\bf Theorem B.} (Steinberg \cite{Stei}, Chatterjee \cite{Ch2}--\cite{Ch3})
{\it Let $K$ be an algebraically closed field of characteristic exponent $p$
(i.e., $p=1$ if $\ch K =0$ and $p=\ch K$ otherwise).
Let $ G$ be a connected semisimple algebraic group.
Then the map $x\mapsto x^m$ is surjective on $ G$ if and only if $m$
is prime to $prz$, where $z$ is the order of the centre of $
G$ and $r$ is the product of ``bad'' primes.}}

\bigskip

In particular, one can guarantee that $m$-th roots can be extracted
in an arbitrary connected semisimple group of adjoint type over
$\mathbb C$ if and only if $m$ is prime to 30.

The case of unipotent algebraic groups was considered in some detail in
\cite{Ch1}--\cite{Ch3}, \cite{DM}, \cite{BM}. However, the following examples
show that it may not be easy to combine the two cases. Namely, for a perfect
(or even strictly perfect) group $G$ and a word $w=x^m$ such that the power map
$\w$ is surjective on both $H = G/U$ and $U = R_u(G)$, the map $\w$ may or may not be surjective
on $G$.

\begin{example}
Let $G = HU$ where $H = \SL_2(\C)$ and $U$ is the $6$-dimensional irreducible $H$-module.
Let $ m = 3$. Then the word map $x\mapsto x^3$ is surjective
on $H$ and on $U$. The weights of $U$ are $\xi^5, \xi^3, \xi, \xi^{-1}, \xi^{-3}, \xi^{-5}$
where $\xi\colon T\rightarrow \C^*$ is the weight of the natural $2$-dimensional representation
(indeed, the $6$-dimensional representation is the representation on $5$-forms in $2$-variables).
Hence $G$ is a strictly perfect group. Let $\sigma\in H$ be an element of order $3$, and let $g$
be any element of $H$ such that $g^3 = \sigma$. We may assume $g, \sigma \in T$. Then in the basis
consisting of the weight vectors the elements $g$ and $\sigma$ are represented by the
following diagonal matrices:
$$g = diag (\ep_9^5, \ep_9^3, \ep_9, \ep_9^{-1}, \ep_9^{-3}, \ep_9^{-5}) $$
where $\ep_9 = \xi(g) = \sqrt[9]{1}$,
$$\sigma = diag (\ep_3^5, 1, \ep_3, \ep_3^{-1}, 1, \ep_3^{-5})$$
where $\ep_3 = \xi(g^3) = \sqrt[3]{1}$.

For every $m$ we have
$$(gu)^m = g^m N_g(u)$$
where
$$N_g(u) = (g^{1-m} u g^{m-1}) \cdots (g^{-1} u g) u.$$
We may view $N_g$ as a linear operator $N_g\colon U\rightarrow U$ which is equal to the sum of linear
operators $1 + g + g^2 +\cdots +g^{m-1}$. Thus, for $m= 3$ we have
$$N_g = 1 +g + g^2.$$
Let $v \in U$ be a weight vector corresponding to the weight $\xi^3$, then $N_g(v) = (1 + \ep_3 +\ep_3^2) v = 0$,
and therefore there is no $u$ such that
$N_g(u) = v$. Hence $\sqrt[3]{\sigma v} \notin G$, and therefore the map $x\mapsto x^3$ is not surjective on $G$.
\end{example}

\begin{example}
Let $G = HU$ where $H = \PSL_2(\C)$ and $U$ is the $3$-dimensional  (adjoint) irreducible $H$-module.
Then the word map induced by $w=x^m$ is surjective
on $H$ and on $U$. Also, {\em the word map $x\mapsto x^m$ is surjective on $G$.}

\begin{proof}
Let $\sigma\in H$, and let $g\in G$ be such that $g^m = \sigma$. We may assume $m > 1$ and $\sigma \ne 1$.
Consider the linear operator $N_g = 1 + g + \cdots + g^{m-1}$ on $U$. We have
$$N_g(u) = 0\,\,\,\text{for some }\,\,\,u \in U \Leftrightarrow \text{ the  eigenvalues of }\,\,\,g\,\,\, \text{are }\,\,\,\sqrt[m]{1}\Leftrightarrow \sigma = g^m = 1 $$
(note that both non-identity eigenvalues of $g$ are of the same order). Since $\sigma \ne 1$, the map  $N_g \colon U\rightarrow U$
is surjective. Thus, for every $v\in U$ there is $u \in U$ such that $N_g(u) = v$. Hence $\sqrt[m]{\sigma v} = g u \in G$, and therefore the map $x\mapsto x^m$ is surjective on $G$.
\end{proof}
\end{example}

\subsection{Commutator word. Ore's problem.}
In 1951 Oystein Ore proved (see \cite{Or}) that every element $g$ of the alternating group $A_n, n \geq 5, $
is a single commutator $g = [\sigma, \tau], \sigma, \tau\in A_n$.
At the end of the paper he wrote: ``It is possible that a similar theorem holds for any simple group of finite order,
but it seems that at present we do not have the necessary methods to investigate the question.''

The conjecture of O.~Ore (known now as Ore's problem) was proved only in 2010 by
Liebeck, O'Brien, Shalev, Tiep \cite{LOST1}.
The solution indeed used advanced techniques which were, of course, unavailable in 1951: the
classification of finite simple groups, the Deligne--Lusztig theory of characters of finite groups of Lie type, as well as advanced
computer algebra. Also, the authors used solution of Ore's problem for groups of Lie type over fields containing more than 8 elements \cite{EG1}.
The interested reader is referred to the Bourbaki talk by Malle \cite{Ma} for a detailed account of this story.

Thus, now we can say that for every finite simple group $G$ the word map $\w\colon G\times G\rightarrow G$ induced by $w =[x, y]$ is surjective.

Even before Ore's paper, in 1949, M.~Got\^o \cite{Got} proved the surjectivity of the word map $\w\colon G\times G\rightarrow G$ for $w = [x, y]$
and for a simple compact Lie group $G$, which is the group of real points $G = \mathcal G(\R)$ of an anisotropic simple algebraic $\R$-group $\mathcal G$
(see \cite[5.2]{VO}).
This result was later on extended to simple complex Lie groups \cite{PW} and, more generally, to
$G = \mathcal G(K)$ where $\mathcal G$ is a simple algebraic group over an algebraically closed field $K$ of arbitrary characteristic \cite{Re}.
(Note that recently Got\^o's method was applied in a more general context \cite{ET}, \cite{Gor4}, see Section \ref{cox} for some details.)

If $K$ is an infinite field and $\mathcal G$ is a simple, simply connected, split $K$-group, then the word map for $w =[x, y]$ is surjective on $G\setminus Z(G)$ (see \cite{EG1}).
The same is true for a finite field $K$ and any simply connected group  $\G$ \cite{EG1}, \cite{LOST1}
(except for the cases where $G/Z(G)$ is not a finite simple group).

Combining the results mentioned above with a theorem of Blau \cite{Bl} on representing central elements of finite groups of Lie type as commutators, we may formulate

\bigskip

{\noindent {\bf Theorem C}.
 {\it  Let $\mathcal G$ be a simple, simply connected algebraic $K$-group , and let $G = \mathcal G(K)$. Further, let
 $\w \colon G\times G\rightarrow G$ be the word map induced by $w = [x, y]$. 
\begin{itemize}
\item[(i)] If $K$ is an algebraically closed field or a finite field such that $G/Z(G)$ is a simple group, then $\Imm \w = G$.
\item[(ii)] If $K = \R$ and $\mathcal G$ is an anisotropic group, then $\Imm\w = G$.
\item[(iii)] If $\mathcal G$ is  split over $K$, then $\Imm\w \supset G\setminus Z(G)$.
\end{itemize}
}}
\bigskip

{\noindent {\bf Remark D}. In (iii) we take $G\setminus Z(G)$ instead of the whole $G$ because elements of the centre $Z(G)$  may not be representable by
a single commutator of elements of $G$. Say, let $G = \SL_2(\R)$. It is well known that the matrix $-1$ is not a commutator. Indeed, if $-1 = [x, y]$, then it is easy to see that
$x^2 = y^2 = -1$, and therefore $\left< x, y\right> = Q_8$ is a quaternion group. But $Q_8\nsubseteq \SL_2(\R)$.}


\subsection{Commutator width.} For any group $G$ the commutator width $l_c(G)$ is defined
as the smallest $n \in \N\cup\infty$ such that for the word
$$w = [x_1, y_1][x_2, y_2]\cdots [x_n,y_n]$$
the word map $\w\colon G^n\times G^n\rightarrow [G, G]$ is surjective.

\begin{example}
Remark D shows that in Theorem C(iii) we may have $l_c(G) > 1$ where
$G =\mathcal G(K)$ and $\G$ is a simple, simply connected, split
$K$-group. However, every element $\gamma \in Z(G)$ can be
represented as a product $g_1g_2$ where $g_1, g_2 \notin Z(G)$.
According to Theorem C(iii), $g_1$ and $g_2$ are commutators, and
therefore $l_c(G) \leq 2$.
\end{example}

\begin{example}
Let $A$ be a Noetherian commutative local ring with residue field $K$, let $\G$ be an $A$-group scheme, and let $G=\G(A)$.
Suppose that the special fibre $\G_K$ is a simple, simply connected, split $K$-group. We have $\G_K(K)=G/M$ where $M$ is the congruence subgroup of $G$.
If $K$ is big enough (in particular, if $K$ is an infinite field), then every
$g \in G, g \notin Z(G)M$ is a single commutator \cite[Theorem~3]{GS}. The same arguments as in the previous example show that $l_c(G) \leq 2$.
\end{example}

\begin{prop}
Let $G$ be a firm perfect group over a field $K$ of characteristic zero. Then $l_c(G) \leq 2$.
\end{prop}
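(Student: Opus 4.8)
The plan is to deduce this from Theorem~\ref{th1.2} together with the elementary fact that the product of two non-empty Zariski-open subsets of a connected linear algebraic group exhausts the whole group \cite[Ch.~I, Prop.~1.3]{Bo1}; so the whole argument is essentially the same as the one proving the Corollary after Theorem~\ref{th1.1}, specialized to the commutator word where we now have dominance in two variables rather than four.

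First I would invoke Theorem~\ref{th1.2}: since $G$ is a firm perfect algebraic group and $\ch K = 0$, the word map $\w_0\colon G^2\to G$ associated with the commutator word $w_0=[x,y]$ is dominant. Hence $\Imm\w_0$ contains a non-empty open subset $\Omega\subseteq G$. Next, put $w=[x_1,y_1][x_2,y_2]\in F_4$; as the two commutators involve disjoint sets of variables, the image of the corresponding word map factors as $\Imm\w=\Imm\w_0\cdot\Imm\w_0\supseteq\Omega\cdot\Omega$. Applying \cite[Ch.~I, Prop.~1.3]{Bo1} to the connected group $G$ gives $\Omega\cdot\Omega=G$, so $\Imm\w=G$. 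Since $G$ is perfect, $[G,G]=G$, and therefore the word map $\w\colon G^2\times G^2\to[G,G]$ is surjective, which is exactly the assertion $l_c(G)\le 2$.

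I do not expect any real obstacle here: everything hard is already packaged in Theorem~\ref{th1.2}, and the only point to keep in mind is that connectedness of $G$ (a standing hypothesis of this section) is used in the open-times-open step. I would also add a remark that we do not claim $l_c(G)=1$: Theorem~\ref{th1.2} yields only density, not surjectivity, of the single-commutator map, and already over semisimple groups (see Remark~D) the value $1$ can fail, so $2$ is the best general bound one should expect from this method.
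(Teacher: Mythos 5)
Your argument is correct and is exactly the route the paper intends: its entire proof reads ``It follows from Theorem~\ref{th1.2}'', and the missing details are precisely your combination of the dominance of the single-commutator map with the open-times-open fact from \cite[Ch.~I, Prop.~1.3]{Bo1}, as already used in Corollary~\ref{cor2}. Your closing remarks on connectedness and on why one should not expect $l_c(G)=1$ from this method are accurate but not needed for the proof itself.
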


\begin{proof}
It follows from Theorem \ref{th1.2}.
\end{proof}

\begin{remark}
In the case where $G$ is a finite perfect group, examples with
$l_c(G)>1$ are known long ago (see, e.g., \cite{Is}). Recently, sharp
estimates for $l_c(G)$ have been obtained for quasi-simple groups
\cite{LOST2}. However, for arbitrary finite perfect groups one
cannot hope for general estimates: the commutator width is
unbounded, see \cite{HP}, \cite{Ni}. See \cite{Se}, \cite{Li} for a survey of results
on width with respect to more general words.
\end{remark}

\subsection{Coxeter elements and Engel words} \label{cox}
Coxeter elements were used by Got\^o \cite{Got} for proving the
surjectivity of commutator maps. Recently, this approach was applied
in \cite{ET} and later on in \cite{Gor4} for studying Engel words.
See \cite{EG2} for other applications.

\bigskip

Let $R$ be an irreducible root system. Fix a set of simple roots
$\Pi = \{\alpha_1, \dots, \alpha_n\}\subset R$. Let $W$ be the Weyl group of $R$.
Any product of reflections $w_c=\prod_i w_{\alpha_i}$ where each $\alpha_i \in \Pi$ appears
exactly once is called a {\it  Coxeter element} of $W$ (it is allowed to take reflections $w_{\alpha_i}$ in such a product in any order).

Let $\mathcal G$ be a simple algebraic $K$-group corresponding to the root system $R$ (here $K$ is not necessarily algebraically closed),
and let $\mathcal T$ be a maximal torus of $\mathcal G$. Let $\mathcal N_\mathcal G (\mathcal T)$ be the normalizer of $\T$ in $\G$, then $\mathcal N_\G(\T)/\T \approx W$. For any $w \in W$ let $\dot w$ be a fixed preimage in $\mathcal N_\G(\T)$.  A Coxeter element $w_c$ acts on the Euclidian vector space generated by the simple roots from $\Pi$ without fixed nonzero vectors (see \cite[Planches~I--X]{Bou}).
Thus, any preimage $\dot w_c\in \mathcal N_{\G}(\T)$ commutes only with the centre of $\G$. Hence the homomorphism
$[\dot w_c, *] \colon \T\rightarrow \T$, $t\mapsto [w_c,t]$, has a finite kernel and is therefore surjective
(as a map on the algebraic group $\T$ but not necessarily on $\T(K)$). Hence the map
$$
\Phi_c^m=\underbrace{[\dot w_c, [\dot w_c, \cdots [\dot w_c, *]\cdots ]]}_{m-\text{times}} \colon \T\rightarrow \T,
$$
$$
t\mapsto [\dot w_c, [\dot w_c, \cdots [\dot w_c, t]]]
$$
is surjective for any $m\ge 1$.

Now let $\T$ be a $K$-torus. Denote $T = \T(K)$, then $N_G(T) = \mathcal N_\G(\T)(K)$.

\begin{prop}
\label{pr2.1}
Suppose that the following conditions hold:
\begin{itemize}
\item[(a)] $\dot w_c \in N_G(T)$ for some Coxeter element $w_c \in W$;
\item[(b)] the group $T$ is divisible.
\end{itemize}
Then the map
$$\phi_c^m = [\dot w_c, [\dot w_c, \cdots [\dot w_c, *]\cdots ]] \colon T\rightarrow T$$
is surjective.
\end{prop}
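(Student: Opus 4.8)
The plan is to reduce the statement to the surjectivity of the corresponding map on the algebraic torus $\mathcal T$ (which was established in the discussion preceding the Proposition, using that a Coxeter element acts without nonzero fixed vectors, so $[\dot w_c, *]\colon \mathcal T\to\mathcal T$ has finite kernel and hence is surjective as a morphism of algebraic groups, and iterating keeps it surjective). So fix $t_1\in T$; we want $t_0\in T$ with $\phi_c^m(t_0)=t_1$. By hypothesis (a), $\dot w_c$ normalizes $T$, so the map $\phi_c^m$ indeed sends $T$ into $T$, and conjugation by $\dot w_c$ induces an automorphism $\theta$ of the torus $\mathcal T$ defined over the field of definition; on $T=\mathcal T(K)$ the map is $t\mapsto \prod$ of iterated commutators, which unwinds to $t\mapsto (\theta-1)^m$ applied multiplicatively, i.e. $\phi_c^m(t)= (\theta-1)^m\cdot t$ in the notation where $\Z[\theta]$ acts on the abelian group $\mathcal T$.

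First I would make the identification with the cocharacter lattice explicit: after choosing coordinates $\mathcal T\cong (\GL_1)^r$, the group $\mathcal T(\bar K)$ is identified with $\bar K^*\otimes_\Z X_*(\mathcal T)$ (or more concretely with an $r$-fold product of copies of $\bar K^*$), and $\theta$ acts through an integer matrix $A\in\GL_r(\Z)$ — the matrix of the Coxeter element on the cocharacter lattice. Since the Coxeter element has no eigenvalue $1$, the matrix $A-I$ is nonsingular over $\Q$, so $N:=\det(A-I)^m\ne 0$; call $B=(A-I)^m$, an integer matrix with $\det B=N\ne 0$. By the theory of the Smith normal form there exist $P,Q\in\GL_r(\Z)$ with $PBQ=\operatorname{diag}(d_1,\dots,d_r)$, $d_i\mid d_{i+1}$, $\prod d_i = \pm N$. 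Thus, up to the automorphisms of $T$ given by $P$ and $Q$ (which preserve $T$ since they are integral and invertible), the map $\phi_c^m$ becomes the coordinatewise power map $(\zeta_1,\dots,\zeta_r)\mapsto(\zeta_1^{d_1},\dots,\zeta_r^{d_r})$ on $T\cong$ (a product of copies of $\bar K^*$ cut out by the torus conditions).

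Now I would invoke hypothesis (b): $T$ is divisible, so every power map $x\mapsto x^{d}$ is surjective on $T$. Hence the diagonalized map is surjective, and therefore so is $\phi_c^m$ itself; this gives the claim. The main point requiring care — and the step I expect to be the real content — is the verification that one may genuinely pass to the cocharacter lattice over the non-algebraically-closed field $K$: one must check that $\phi_c^m\colon T\to T$, written in the Smith-normal-form coordinates, really is the product of honest power maps $x\mapsto x^{d_i}$ on the $K$-points, and that the change-of-coordinates automorphisms $P,Q$ are defined over $K$ (they are, being automorphisms of the split part, but if $\mathcal T$ is not split one has to work Galois-equivariantly, replacing "$T$ divisible" by the hypothesis as stated and checking divisibility is inherited by the relevant subquotients). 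A clean way to sidestep the non-split subtlety is to note that $N\cdot x$ lies in the image of $(A-I)^m$ already at the lattice level over $\Z$ — i.e. $N\cdot \mathrm{id} = B\cdot(\operatorname{adj} B)$ — so $\phi_c^m(y)^{\,\text{(where }y\text{ solves }(\operatorname{adj} B)\text{-equation)}}$ hits any $N$-th power, and then divisibility of $T$ (which lets us take $N$-th roots) closes the gap without ever diagonalizing. Either route works; the first is more transparent, the second more robust, and I would present the second.
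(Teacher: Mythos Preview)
Your proof is correct and is essentially a fleshed-out version of the paper's one-line argument, which reads in its entirety: ``The map $\phi_c^m$ is a homomorphism of the infinite divisible abelian group $T$ with finite kernel.'' Your adjugate trick (writing $B\cdot\operatorname{adj}(B)=N\cdot I$ on cocharacters, so that $\phi_c^m$ composed with another $K$-endomorphism of $T$ equals the $N$-th power map, which is surjective by divisibility) is precisely the mechanism that justifies that sentence---note that for abstract divisible abelian groups an endomorphism with finite kernel need not be onto (e.g.\ the right shift on $\bigoplus_{\N}\Q$), so the algebraic-torus structure is genuinely being used here, and you make this explicit where the paper leaves it implicit. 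Your caution about the non-split case is also well placed, and the adjugate route handles it cleanly since $\operatorname{adj}(B)$ is an integer polynomial in $B$ (by Cayley--Hamilton, using $\det B\ne 0$) and hence inherits Galois-equivariance from $B$.
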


\begin{proof}
The map $\phi_c^m$ is a homomorphism of the infinite divisible abelian group $T$ with finite kernel.
\end{proof}

\begin{cor}
\label{cor2.1}
Suppose that one of the following conditions holds:
\begin{itemize}
\item[(i)] the torus $\T$ is completely split and $K^*$ is a divisible group;
\item[(ii)] $K = \R$ is the field of real numbers and $\G$ is anisotropic.
\end{itemize}
Then the map
$$\phi_c^m = [\dot w_c, [\dot w_c, \cdots [\dot w_c, *]\cdots ]] : T\rightarrow T$$
is surjective.
\end{cor}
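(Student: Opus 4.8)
The plan is to deduce Corollary \ref{cor2.1} directly from Proposition \ref{pr2.1} by verifying, in each of the two cases, that hypotheses (a) and (b) hold for a suitable choice of maximal torus $\T$ and Coxeter element $w_c$.

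First I would treat case (i). If $\T$ is completely split over $K$, then $T = \T(K) \cong (K^*)^n$, so $T$ is a direct product of copies of $K^*$; since $K^*$ is divisible, $T$ is divisible, giving (b). For (a), when $\T$ is split the Weyl group acts on $T$ by permuting/inverting coordinates exactly as it acts on the character lattice, and every element of $N_{\mathcal G}(\T)/\T \cong W$ is realized by a $K$-point of $N_{\mathcal G}(\T)$ because $\T$ is split (one can build explicit representatives $\dot w_{\alpha_i}$ of the simple reflections inside the rank-one subgroups generated by the root subgroups $U_{\pm\alpha_i}$, which are defined and split over $K$). Hence any Coxeter element $w_c$ has a preimage $\dot w_c \in N_G(T)$, and Proposition \ref{pr2.1} applies.

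Next I would treat case (ii), $K = \R$ with $\mathcal G$ anisotropic. Here a maximal torus $\T$ of $\mathcal G$ is anisotropic over $\R$, so $T = \T(\R)$ is a compact torus, i.e. $T \cong (\R/\Z)^n$ up to isomorphism of real Lie groups, and such a group is divisible, giving (b). For (a) I would invoke the structure theory of compact (anisotropic) real forms: in a compact connected Lie group every element lies in a maximal torus and the Weyl group $N_G(T)/T$ is again the full abstract Weyl group $W$ of the root system, so once more every $w \in W$, in particular a Coxeter element $w_c$, is represented by an element $\dot w_c \in N_G(T)$. Then Proposition \ref{pr2.1} finishes the argument.

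The statement is essentially a packaging of two standard facts, so there is no deep obstacle; the one point requiring a little care is hypothesis (a) — namely that the Coxeter element actually lifts to a $K$-rational element of the normalizer of $T$, rather than just living in $W = N_{\mathcal G}(\T)/\T$ abstractly. In the split case this is the usual construction of Weyl group representatives from $\SL_2$- or $\mathrm{SO}_3$-triples attached to simple roots; in the anisotropic real case it follows because for a compact group $N_G(T)/T$ already surjects onto $W$. I would state these two reductions cleanly and cite \cite[Planches~I--X]{Bou} (for the torsion-freeness of the Coxeter action, already used above) together with a standard reference for the structure of split and of compact groups, then simply say ``apply Proposition \ref{pr2.1}''.
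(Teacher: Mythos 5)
Your proposal is correct and follows essentially the same route as the paper: in both cases one checks hypotheses (a) and (b) of Proposition \ref{pr2.1} — divisibility of $T\cong (K^*)^n$ (resp. of the compact torus) for (b), and the existence of a $K$-rational representative $\dot w_c\in N_G(T)$ for (a), the latter being immediate in the split case and following from $N_G(T)/T\approx W$ for compact groups in the anisotropic real case. The paper's proof is just a terser version of yours, citing a single reference for the compact-group fact.
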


\begin{proof}
Condition (i) contains condition (b) of Proposition \ref{pr2.1}, and condition (b) obviously holds.

If $\G$ is an anisotropic $\R$-group, then $G = \G(\R)$ is a connected compact Lie group. Hence $N_G(T)/T \approx W$ \cite[6.9.6]{GG}.
Hence we have $\dot w_c \in N_G(T)$. Note that $T = S^1\times S^1\times \cdots \times S^1$ where $S^1$ is the one-dimensional anisotropic $\R$-torus. Hence $T$ is a divisible group.
\end{proof}

Now let $$w_E^m =  [y, [y, \cdots [y, x]\cdots ]]$$ be an Engel word.

\begin{cor}
\label{cor2.2}
Suppose that the torus $\T$ is completely split and $K^*$ is a divisible group. Then for any Engel word $w_E^m$
the image $\Imm \w_E^m$ of the word $\w_E^m\colon G\times G\rightarrow G$ contains all semisimple elements and all unipotent elements.
\end{cor}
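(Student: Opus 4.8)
The plan is to exhibit inside $\Imm\w_E^m$ both the group $T=\T(K)$ of $K$-points of the maximal split torus and the group $\mathcal U(K)$ of $K$-points of the unipotent radical $\mathcal U=R_u(\mathcal B)$ of a Borel subgroup $\mathcal B\supseteq\T$, and then to conclude via the $\Aut(G)$-invariance of $\Imm\w_E^m$ recorded in Section~\ref{sec1.1} --- which makes $\Imm\w_E^m$ a union of $G$-conjugacy classes --- together with the standard facts that every semisimple element of $G$ is $G$-conjugate to an element of $T$ and every unipotent element of $G$ is $G$-conjugate to an element of $\mathcal U(K)$. (Here $K$ is understood to be infinite, which is automatic in the cases of interest.)

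The semisimple part is immediate from what is already available. Fix a Coxeter element $w_c\in W$; since $\T$ is split it admits a representative $\dot w_c\in N_G(T)$, and Corollary~\ref{cor2.1}(i) says that $\phi_c^m\colon T\to T$, $t\mapsto[\dot w_c,[\dot w_c,\cdots[\dot w_c,t]\cdots]]$, is surjective. Setting $y=\dot w_c$ in $\w_E^m$, the resulting map $x\mapsto[\dot w_c,[\dot w_c,\cdots[\dot w_c,x]\cdots]]$ restricts on $T$ to $\phi_c^m$; hence $T=\phi_c^m(T)\subseteq\Imm\w_E^m$, and therefore every $G$-conjugate of a $T$-element --- that is, every semisimple element of $G$ --- lies in $\Imm\w_E^m$.

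For the unipotent part, write $\mathcal U=\mathcal U_1\supseteq\mathcal U_2\supseteq\cdots$ for the central filtration of $\mathcal U$ in which $\mathcal U_i$ is generated by the root subgroups attached to positive roots of height $\ge i$; the quotients $V_i=\mathcal U_i/\mathcal U_{i+1}$ are vector groups over $K$ on which $\T$ acts linearly through the roots of height $i$. For $t\in T$ the map $\d\colon\mathcal U\to\mathcal U$, $u\mapsto\w_E^m(u,t)=[t,[t,\cdots[t,u]\cdots]]$, satisfies conditions (I) and (II) of Lemma~\ref{lem1.0} by a commutator computation of the same kind as in the proof of Theorem~\ref{th1.1}, and the induced map $\d_i\colon V_i\to V_i$ equals $(\tau_i-\id)^m$, where $\tau_i\in\GL(V_i)$ is the action of $t$. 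Its eigenvalues are the values $\mu(t)$ of the finitely many weights $\mu$ of $V_i$, each a nonzero character of $\T$; so for $t$ in the complement of the finitely many proper closed subgroups $\{\mu(t)=1\}$ of $\T$ --- a nonempty open subset, which has a $K$-point because $\T$ is split --- each $\d_i$ is an invertible $K$-linear map, so $\d_i(V_i)=V_i$ for all $i$. Lemma~\ref{lem1.0} then gives $\d(\mathcal U(K))=\mathcal U(K)$, whence $\mathcal U(K)\subseteq\Imm\w_E^m$, and conjugacy-invariance again upgrades this to all unipotent elements.

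The step that really requires attention is the reduction of the semisimple case to elements of $T$, i.e.\ the claim that every semisimple element of $G$ is $G$-conjugate to an element of the split torus. This is obvious when $K$ is algebraically closed, but in general it is precisely here that the divisibility of $K^*$ enters: one uses it to bring the relevant characteristic roots into $K$, so that the element becomes diagonalizable over $K$, and then invokes the $G(K)$-conjugacy of all maximal $K$-split tori of a split group. Everything else --- the verification of (I) and (II) for $\d$, the bookkeeping with the weight loci $\{\mu(t)=1\}$, and the conjugacy of unipotent elements into $R_u(\mathcal B)$ --- is routine given the apparatus of Section~\ref{sec:perf} and standard structure theory; the only genuinely new ingredient is Corollary~\ref{cor2.1}(i).
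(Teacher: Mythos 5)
Your proof follows essentially the same two-step route as the paper: the semisimple part is exactly the paper's (Corollary~\ref{cor2.1}(i) applied with $y=\dot w_c$, then conjugation-invariance of the image), and for the unipotent part you, like the paper, fix a suitable $t\in T$ and show that $u\mapsto[t,[t,\cdots[t,u]\cdots]]$ is surjective on a maximal unipotent subgroup; the paper does this in one line via the equality $[t,U]=U$ for $t$ regular semisimple, proved by induction on the nilpotency class, while you run the same induction through the filtration-and-Lemma~\ref{lem1.0} machinery of Section~\ref{sec:perf}. The one place where your write-up goes astray is the closing paragraph: divisibility of $K^*$ gives $n$-th roots of elements \emph{of} $K$, not roots of arbitrary characteristic polynomials, so it does not make a general semisimple element of $G$ diagonalizable over $K$, and over a non-algebraically-closed field a semisimple element need not be conjugate into the split torus at all. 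That reduction really rests on the paper's standing convention that $K$ is algebraically closed (under which it is immediate); the paper's own proof glosses over the point just as silently, so this does not change the substance of your argument, but the justification you offer for it should be dropped.
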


\begin{proof}
The statement for semisimple elements immediately follows from Corollary \ref{cor2.1}(i).

Since $\G$ is a split group, all unipotent elements of $G$ are conjugate to an element of a fixed maximal connected unipotent subgroup $U\leq G$,  which is normalized by the corresponding group $T$.
Since $K^*$ is a divisible group, the field $K$ is infinite and therefore there is a regular semisimple element $t \in T$.
Then $\w_E^m(t, U) = U$ (this follows from the well-known equality $[t, U]= U$ which in its turn can be proved by induction on the nilpotency class of $U$). So all  unipotent elements of $G$ lie in $\Imm \w_E^m$.
\end{proof}

The following fact has been proved in \cite{ET} for the unitary groups and in \cite{Gor4} for the general case.

\begin{cor}
\label{cor3.2}
Suppose that $K = \R$ is the field of real numbers and $\G$ is anisotropic.
Then $\Imm \w_E^m = G$.
\end{cor}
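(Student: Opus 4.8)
The plan is to reduce the assertion to the surjectivity of the iterated commutator map on a single maximal torus, already established in Corollary \ref{cor2.1}(ii), and then to propagate it to all of $G$ by conjugation. Since $\G$ is an anisotropic simple algebraic $\R$-group, $G = \G(\R)$ is a connected compact Lie group. First I would fix a maximal torus $T \leq G$ and a Coxeter element $w_c$ of the Weyl group $W$. As recalled in the proof of Corollary \ref{cor2.1}(ii), for a compact connected group one has $N_G(T)/T \approx W$, so there is a representative $\dot w_c \in N_G(T)$; moreover $T$ is a direct product of copies of $S^1$, hence divisible, so the hypotheses of Proposition \ref{pr2.1} are satisfied. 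Consequently the map $\phi_c^m \colon T \to T$, $t \mapsto [\dot w_c, [\dot w_c, \cdots [\dot w_c, t]\cdots]]$ with $m$ occurrences of $\dot w_c$, is surjective.

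Next I would identify this with a restriction of the Engel word map: substituting $y = \dot w_c$ and letting $x$ range over $T$ in $w_E^m = [y, [y, \cdots [y, x]\cdots]]$ gives $\w_E^m(t, \dot w_c) = \phi_c^m(t)$ for all $t \in T$. Since $\phi_c^m(T) = T$, this already yields $T \subseteq \Imm \w_E^m$.

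Finally I would use the elementary observation recorded in Section \ref{sec1.1} that $\Imm \w_E^m$ is an $\Aut(G)$-invariant subset of $G$; in particular it is invariant under all inner automorphisms, so $g T g^{-1} \subseteq \Imm \w_E^m$ for every $g \in G$, i.e. $\bigcup_{g \in G} g T g^{-1} \subseteq \Imm \w_E^m$. Because $G$ is a connected compact Lie group, every element of $G$ lies in some maximal torus and all maximal tori are conjugate, so $\bigcup_{g \in G} g T g^{-1} = G$. Hence $\Imm \w_E^m = G$.

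As for the main obstacle: there is essentially none, since Corollary \ref{cor2.1}(ii) has already done the substantive work. The only point that needs a word of justification is the final step, which rests on the two classical structural facts about compact connected Lie groups used above — that $N_G(T)/T$ is the Weyl group and that $G$ is the union of the conjugates of a maximal torus.
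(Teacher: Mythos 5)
Your proof is correct and fills in exactly the argument the paper leaves implicit when it declares the corollary a ``direct consequence of Corollary \ref{cor2.1}(ii)'': the surjectivity of $\phi_c^m$ on $T$ gives $T\subseteq\Imm\w_E^m$ via the specialization $y=\dot w_c$, and conjugation-invariance of the image together with the maximal torus theorem for compact connected groups yields all of $G$. This is the same approach as the paper's, just written out in detail.
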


\begin{proof}
This is a direct consequence of Corollary \ref{cor2.1}(ii).
\end{proof}

\begin{remark}
The question on the surjectivity of Engel words is open.
The surjectivity is known for $\SL_2$ (see \cite{BZ}), $\PGL_3$ (only for $w_E^1, w_E^2 $) and the groups of types $B_2, G_2$ (see \cite{Gor4}).
See \cite{BGG} for the surjectivity of Engel words on some finite groups of Lie type.
\end{remark}

\subsection{Fixed point free elements in the Weyl group.}
In the previous section we used only one property of Coxeter elements:
they act without fixed points on the Euclidian space $\E$ generated by a simple root system $\Pi$.
Obviously, the same property is shared by all elements from the conjugacy class $C_{w_c} = \{w w_cw^{-1}\mid  w\in W\}$ of a Coxeter element $w_c$ in the Weyl group (note that all Coxeter elements are conjugate in the Weyl group (see \cite[Ch.~5, Prop.~6.1]{Bou}) but not every element in this conjugacy class is a Coxeter element). Thus, in Proposition \ref{pr2.1} and Corollary \ref{cor2.1} we may replace
the preimages of Coxeter elements $\dot w_c$ with the
preimage $\dot w$ of any  $w \in C_{w_c}$.

Below we point out three examples for fixed point free elements in the Weyl group, which can be used in questions like the surjectivity of word maps.

\begin{example}
Any simple group $\G$ contains a semisimple algebraic subgroup $\H = \H_1H_2\cdots \H_m$ such that every simple component $\H_i$ of $\H$ is of type $A_{r_i}$ and the product of maximal tori $\T_1\T_2\cdots \T_m$  of  the components is a maximal torus $\T$ of $\G$ (see  \cite{Bo2}). Then in Proposition \ref{pr2.1} and Corollary \ref{cor2.1} we may replace the element $\dot w_c$ with an element of the form $\dot w_{1c}\dot w_{2c}\cdots \dot w_{mc}$ where
$w_{ic}$ is a Coxeter element of $\H_i$.
\end{example}

\begin{example}
If $\G$ is not of type $A_r$, $D_{2r+1}$, or $E_6$, in Proposition \ref{pr2.1} and Corollary \ref{cor2.1} we may also
take an element of the
form $\dot w_0$ where $w_0$ is the element of maximal weight in $W$ (indeed, in these cases one can check that $\dot w_0(t) = t^{-1}$ for every $t \in T$).
\end{example}

The following example was used in \cite{HLS}.

\begin{example}
\label{ex759}
Let $R$ be the root system of type $D_r$, and let
$$\alpha_1 = \e_1 -\e_2,\alpha_2 = \e_2 - \e_3, \dots, \alpha_{r-1} = \e_{r-1} - \e_r, \alpha_r = \e_{r-1} +\e_r$$
be the standard simple root system in the notation of \cite{Bou}. Put $\beta = \e_1 - \e_r$. Then the element
$w^*  = w_\beta w_{\alpha_r}w_{\alpha_{r-2}}\cdots w_{\alpha_2} w_{\alpha_1}$ acts freely on $\E$. Indeed, we may take
$\{\e_1, \dots, \e_r\}$ as a basis in $\E$. Then $w^*$ acts as a cyclic permutation of the set
$\{\e_1, \dots, \e_r, -\e_1, -\e_2, \dots, -\e_r\}$, and therefore $w^*$ cannot have fixed nonzero vectors in
$\E = \left< \e_1, \dots, \e_r\right>$.
\end{example}

\section{Word maps on $G = \SL_2(K)$ and $G = \PGL_2(K)$} \label{sec:gr}

\subsection{Semisimple elements.} Let $K$ be an algebraically closed field, and let $G = \SL_2(K)$. Further, let $1\ne w\in F_n$,
and let $\w\colon \SL_2(K)^n\rightarrow \SL_2(K)$ be the corresponding word map. The first observation here is the following theorem
\cite{BZ}.

\bigskip

{\noindent {\bf Theorem E.} {\it Every semisimple element of $\SL_2(K)$  except, possibly, $-1$ belongs to $\Imm\,\w$}.
}


\begin{proof} Here we give a proof which is a little bit different from \cite{BZ}.

Use induction by $n$.
If $n=1$, the statement is obvious. Suppose that the statement holds for every  $w\in F_{n-1}$.
If $w(1, x_2,\dots, x_n)$ is not a trivial word, we may use the induction hypothesis.
Thus, we may assume that $w(1, x_2, \dots, x_n) = 1$. Further, since $\w$ is dominant,
we have $\w(g_1, \dots, g_n) \ne 1$ for some
$(g_1, \dots, g_n)\in G^n$.  We may assume $g_1\notin Z(G)$ and
\begin{equation}
\label{equa5.1}
g_1 = \begin{pmatrix}1&a\cr b&1+ab\cr \end{pmatrix} = \begin{pmatrix}1&0\cr b&1\cr \end{pmatrix}\begin{pmatrix}1&a\cr 0&1\cr \end{pmatrix}.
\end{equation}
Indeed, every non-central element of $\SL_2(K)$ is conjugate to an element of the form (\ref{equa5.1}) (see \cite{EG1}), and therefore we may take the $n$-tuple $(\sigma g_1\sigma^{-1}, \dots, \sigma g_n\sigma^{-1})$ instead of $(g_1, \dots, g_n)$.

Fix these $g_2,\dots ,g_n$.
Let $\Psi\colon K^2\rightarrow K$ be given by

\begin{equation}
\label{equa5.2}
\Psi(x, y) = \tr \left( w \left(\begin{pmatrix}1&x\cr y&1+xy\cr \end{pmatrix}, g_2, \dots, g_n\right)\right)
\end{equation}
(here we take the trace of the matrix $w \left(\begin{pmatrix}1&x\cr y&1+xy\cr \end{pmatrix}, g_2, \dots, g_n\right)$).
The function $\Psi(x,y)$ is polynomial and $\Psi (0,0) = 2$ because $w(1, g_2, \dots, g_n) = 1$, and $\Psi (a, b) \ne 2$ because $w(g_1, \dots, g_n) \ne 1$. Thus, $\Psi(x, y)$ is a non-constant polynomial and therefore $\Imm\, \Psi = K$.
Formula (\ref{equa5.2}) implies that $\Imm\,\Psi \subset \Imm\, \tr\circ \w$. Hence
$\Imm\, \tr \circ \w = K.$
Thus, for every $\alpha \in K$ there is $g \in \Imm\,\w$ such that $\tr g = \alpha$. If $\alpha \ne \pm 2$, the condition $\tr g = \alpha$ determines a semisimple element $g \in G$ up to conjugacy. If $\tr g = \pm 2$, then $g = \pm u$ where $u$ is a unipotent element. Note that $w(1, 1,\dots, 1) = 1$, and therefore $1 \in \Imm\,w$. Hence we have every semisimple element in $\Imm\, \w$ except, possibly, $-1$.
\end{proof}

The following corollary is also contained in \cite{BZ}.

\bigskip

{\noindent {\bf Corollary F.}  {\it Let $G = \PGL_2(K)$, and let $C_u$ be the conjugacy class of a non-trivial unipotent element $u \in G$. Further, let $\w \colon G^n\rightarrow G$ be the word map induced by a non-trivial word $w \in F_n$. Then $\Imm\, w \supset G\setminus C_u$.}
}

\begin{remark}
We do not know  if $-1\in\Imm\,\w$ for every word $w$.  
\end{remark}


Since every simple algebraic group of Lie rank $r$ (which is not of type $A_r, r > 1$,
$D_{2k+1}$, $k > 1$, or $E_6$) contains a product  of $r$ copies of groups of rank one, we also have
the following fact, which is a corollary of Theorem E (see \cite{GKP1}, \cite{GKP2}).

\bigskip

{\noindent {\bf Corollary G.} {\it Let $G$ be a simple algebraic group. Suppose that $G$ is not of type $A_r, r > 1$,
$D_{2k+1}$, $k > 1$, or $E_6$, and let $\w\colon G^m \rightarrow G$
be a non-trivial word map. Then every regular semisimple element of
$G$ is contained in $\Imm \w$. Moreover, for every semisimple $g\in
G$ there exists $g_0\in G$ of order $\leq 2$ such that $gg_0\in
\Imm\,\w$.}
}

\subsection{Problem of unipotent elements.}
For $G = \SL_2(K)$, where $K$ is an algebraically closed field, the question whether or not for every word $w\in F_n$
the image of the word map $\w\colon G^n\rightarrow G$ contains a non-trivial unipotent element is wide open.
This is unknown even in the case $n=2, K = \C$.

This problem is related to the structure of the representation variety $$R (\Gamma_w, \SL_2(K)) =\{ \rho\colon \Gamma \rightarrow \SL_2(\C)\},$$ where $\Gamma_w = F_n/\left< w\right>$ is a group with $n$ generators and one relation $w$. Namely, let
$$\T_w = \{(g_1, \dots, g_n) \in G^n\,\,\,\mid\,\,\, \tr \w(g_1, \dots, g_n) = 2\}, $$$$\W_w =  \{(g_1, \dots, g_n) \in G^n\,\,\,\mid\,\,\,  \w(g_1, \dots, g_n) = 1\}.$$
Then $\T_w$ is the affine variety of the $n$-tuples $(g_1, \dots, g_n) \in G^n$ mapped onto the set of the unipotent elements of $G$, and $\W_w$ is the set of the $n$-tuples
$(g_1, \dots, g_n) \in G^n$ satisfying the relation $w(g_1, \dots, g_n) =1$. Then $\W_w = R (\Gamma_w, \SL_2(K))$ (see \cite[page~4]{LM}).  Note  that
$\W_w \subseteq \T_w$ and
$$\W_w \ne \T_w \Leftrightarrow \text{there exists a non-trivial unipotent element } \,\,\,u \in \Imm\,w.$$
Further, let $\T_w = \cup_j \T_w^j$, $\W_w = \cup_i\W_w^i$ be the decompositions into irreducible components. Then each $\W_i$ is contained  in some $\T_w^j$. Note that $\T_w$ is an equidimensional variety of dimension $3n - 1$ (indeed, $\T_w$ is a hypersurface in $G^n$ and the ring of regular functions on $G^n$ is factorial). Suppose that there exists a component $\W_w^i$ of $\W_w$  such that $\dim \,\W_w^i  < 3n-1$. 
Then we have $\W_w^i\varsubsetneqq \T_w^j$ for some $j$, and therefore $\T_w^j\setminus \W_w \ne \emptyset$ (recall that we consider varieties and therefore we have no embedded components $\W_w^i \subset \W_w^k$).
Thus we have the implication
$$
\text{the existence of an irreducible component}\,\,\,\W_w^i\,\,\,\text{where}\,\,\, \dim \W_w^i < 3n-1\Rightarrow$$
$$\text{the existence a non-trivial unipotent element } \,\,\,u \in \Imm\,\w. $$

We do not know whether the converse implication is true.

\begin{question}
Is it true that if $\Imm\,\w$ contains a non-trivial unipotent element $u$, then
$\dim \, \W_w^i < 3n-1$ for some component $\W_w^i$?
\end{question}

In \cite{GKP1}, \cite{GKP2} the structure of the varieties $\W_w, \T_w$ was described in the case $n = 2, K = \C$ for $w = [x^k, y^m]$, $w=[x, y]^p$ where $p$ is prime and
$w = [ [x, y], x[x, y]x^{-1}]$. In all these cases we have a component of dimension $4$ or $3$, which is strictly less than $3\cdot 2 - 1 = 5$, and therefore there is a non-trivial $u \in \Imm\, w$.
The investigation of the structure $R(\Gamma_w, \SL_2(\C))$ is interesting from other points of view (see, e.g., \cite[Section~7]{LM}).
Perhaps better understanding of this structure might also give a clue to the problem of the existence of non-trivial unipotent elements in $\Imm\,\w$.


\subsection{Magnus embedding.}
Let $F_n = \left< x_1, \dots, x_n\right>$,  and let  $A = \Z[t_1^{\pm1}, \dots, t_n^{\pm 1}, s_1, \dots, s_n]$ where $t_j, s_i$ are algebraically independent variables (over $\Z$).
The map
\begin{equation}
\label{equa5.100}
x_i\rightarrow \zeta_i = \left\{\begin{pmatrix} t_i&s_i\cr 0&t_i^{-1}\cr \end{pmatrix}\right\}
\end{equation}
induces the injective homomorphism
\begin{equation}
\label{equa5.101}
F_n/ F^2_n\hookrightarrow  \Upsilon =  \left\{\begin{pmatrix} \alpha&\beta\cr 0&\alpha^{-1}\cr \end{pmatrix}\,\,\,\mid\,\,\,\alpha \in A^*,  \beta \in A\right\}
\end{equation}
called the Magnus embedding (see \cite{BZ}). In \cite{BZ} the following consequence of the Magnus embedding was established:

\bigskip

{\noindent {\bf Theorem H. } {\it Let  $L$  be a field of characteristic zero, and let $w \in F_n\setminus F_n^2$.
Further, let $G = \SL_2(L)$, and let $\w\colon G^n\rightarrow G$ be the corresponding word map.
Then the set $\Imm\, \w$ contains all unipotent elements.}
}
\bigskip

Using the Morozov--Jacobson Theorem, one can extend Theorem H to groups $G = \G(L)$ where $\G$ is any semisimple group defined over $L$,  see \cite{BZ}.

\bigskip

Also, in \cite{BZ} there is the following corollary of Theorem H.

\bigskip

{\noindent {\bf Corollary I.} { \it  Let $G = \PGL_2(K)$ where $K$ is an algebraically closed field of characteristic zero, and let $w\notin F_n^2$. Then the word map
$\w\colon G^n\rightarrow G$ is surjective.}
}
\bigskip

One cannot extend this result to the case of characteristic $p > 0$. For instance, if $\ch K = p >0$  and $w = x^p$, then there are no non-trivial unipotent elements
in $\Imm\, \w$. However, we may give the following generalization of Theorem H and Corollary I.

\begin{theorem}
Let $L $ be an infinite field, and let $w \in F_n\setminus F_n^2$.
Further, let $G = \SL_2(L)$, and let $\w\colon G^n\rightarrow G$ be the corresponding word map.
Then there exists a finite set of primes $S_w$ such that if $p = \ch \,L \notin S_w$ (in particular, if $p =0$),
then the set $\Imm\, \w$ contains a non-trivial unipotent element.
\end{theorem}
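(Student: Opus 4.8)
The plan is to adapt the Magnus-embedding argument of Bandman--Zarhin (Theorem~H) to positive characteristic by keeping track of denominators. Recall the setup: the map $x_i\mapsto \zeta_i$ of \eqref{equa5.100} induces the Magnus embedding \eqref{equa5.101} of $F_n/F_n^2$ into the group $\Upsilon$ of matrices $\left(\begin{smallmatrix}\alpha&\beta\\0&\alpha^{-1}\end{smallmatrix}\right)$ over $A=\Z[t_1^{\pm1},\dots,t_n^{\pm1},s_1,\dots,s_n]$. For $w\in F_n\setminus F_n^2$ the image of $w$ under this embedding is a matrix $\left(\begin{smallmatrix}\tau&\beta_w\\0&\tau^{-1}\end{smallmatrix}\right)$ where $\tau=\prod_i t_i^{e_i}$ with not all $e_i$ zero (since $w\notin F_n^2$), and $\beta_w\in A$ is a nonzero polynomial (this nonvanishing is exactly the content underlying Theorem~H over characteristic zero). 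First I would record, from the explicit recursive formula for the Magnus image of a product, that $\beta_w$ is a polynomial whose coefficients are integers; write $c_w\in\Z$ for the gcd of all its coefficients, equivalently the content of $\beta_w$, and let $S_w$ be the (finite) set of primes dividing $c_w$, together with any primes one is forced to invert elsewhere in the argument.

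The key steps, in order. \textbf{Step 1.} Fix a prime $p=\ch L\notin S_w$; then $\beta_w$ does not vanish identically modulo $p$, i.e. it is a nonzero element of $\bar{\mathbb F}_p[t_i^{\pm1},s_i]$. \textbf{Step 2.} Since $\tau=\prod t_i^{e_i}\ne 1$ is a nontrivial monomial, choose values $t_i=\lambda_i\in L^*$ (using that $L$ is infinite, so $L^*$ is infinite) with $\prod\lambda_i^{e_i}=1$ while not all of the specialized partial data degenerates; more precisely, choose the $\lambda_i$ generically on the subtorus $\{\prod t_i^{e_i}=1\}$ so that $\beta_w$, viewed now as a polynomial in the $s_j$ over $L$, remains a nonzero polynomial. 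This is possible because $\beta_w\bmod p$ is nonzero and the subtorus is not contained in its zero locus unless $\beta_w$ is divisible by $\prod t_i^{e_i}-1$ — and one checks from the recursion (or from the characteristic-zero case) that it is not, or else one absorbs finitely many more primes into $S_w$. \textbf{Step 3.} Now $\beta_w(\lambda,s_1,\dots,s_n)$ is a nonzero polynomial over the infinite field $L$ in the variables $s_j$, hence takes every value in $L$ — in particular it takes some nonzero value, say at $s_j=\mu_j$. \textbf{Step 4.} Set $g_i=\left(\begin{smallmatrix}\lambda_i&\mu_i\\0&\lambda_i^{-1}\end{smallmatrix}\right)\in\SL_2(L)$. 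Then $\w(g_1,\dots,g_n)=\left(\begin{smallmatrix}1&\beta_w(\lambda,\mu)\\0&1\end{smallmatrix}\right)$ (the diagonal entry is $\tau$ evaluated $=1$), which is a nontrivial unipotent element. This exhibits a nontrivial unipotent in $\Imm\w$, completing the proof.

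The main obstacle is Step~2: controlling the restriction of $\beta_w$ to the subtorus $\{\prod t_i^{e_i}=1\}$. A priori $\beta_w$ could be divisible by $\prod t_i^{e_i}-1$, in which case its restriction vanishes and the argument collapses; one must show either that this does not happen, or — more robustly — analyze the lowest-degree part of $\beta_w$ along this subtorus directly from the recursive formula for Magnus images. The cleanest route is probably to reduce to a single variable: since $w\notin F_n^2$, after an $\Aut(F_n)$-move (allowed by Proposition~\ref{pr1.1}, which preserves $\Imm\w$) one may assume the abelianization of $w$ is $x_1^{m}$ with $m\ne 0$, so $\tau=t_1^m$ and the relevant subtorus is cut out by $t_1^m=1$; then one is reduced to understanding $\beta_w$ modulo $t_1^m-1$ over $\bar{\mathbb F}_p$, and one throws into $S_w$ the primes dividing $m$ as well as any prime for which a resultant/leading-coefficient computation degenerates. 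The remaining verification — that after discarding these finitely many primes $\beta_w$ genuinely survives to a nonzero polynomial in the $s_j$ — is then a routine specialization argument of the type already used in the proof of Theorem~E.
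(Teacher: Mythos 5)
Your overall strategy --- push the word through the Magnus embedding, observe that the relevant entry $\beta_w\in A$ is a nonzero integral Laurent polynomial because $w\notin F_n^2$, let $S_w$ be the primes dividing its content, and specialize over the infinite field $L$ --- is exactly the paper's argument. In the case $w\in F_n^1\setminus F_n^2$ (trivial exponent sums, so $\tau\equiv 1$) your proof is complete and coincides with the paper's: there is no subtorus to worry about, the Magnus image is already unipotent with entry $f_w\neq 0$, and reduction mod $p\notin S_w$ followed by specialization of the $t_i,s_i$ finishes it.

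The genuine gap is your Step 2, and it is worth noting that it arises \emph{only} in the complementary case $w\notin F_n^1$, i.e.\ precisely the case the paper dismisses in one line: if some exponent sum $e_i=m\neq 0$, substitute $1$ for the other variables to reduce to $x^m$, and $u^m$ is a nontrivial unipotent for any nontrivial unipotent $u$ as soon as $p\nmid m$; no Magnus embedding and no subtorus analysis are needed. By refusing to split off this easy case you have turned it into the hard one, and as written you do not close it: you offer two possible resolutions (checking non-divisibility of $\beta_w$ by $\prod t_i^{e_i}-1$ ``from the recursion,'' or an unexecuted resultant computation after an $\Aut(F_n)$-move) and leave both as ``routine.'' They are not quite routine in the form you state them. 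For instance, for $w=x^m$ one has $\beta_w=s\,t^{1-m}(1+t^2+\cdots+t^{2m-2})$, which vanishes at \emph{every} $m$-th root of unity other than $\pm1$; so the restriction of $\beta_w$ to the subtorus can vanish on most of it, and the naive hope that it ``genuinely survives'' generically is misleading. What actually saves the argument is one specific point: at $t_1=\cdots=t_n=1$ all the $\zeta_i$ become unipotent, hence commute in the metabelian group $\Upsilon$, so
\begin{equation*}
\beta_w(1,\dots,1,s_1,\dots,s_n)=\sum_{i=1}^n e_i s_i ,
\end{equation*}
which is nonzero mod $p$ exactly when $p\nmid\gcd(e_1,\dots,e_n)$. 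Adding the primes dividing $\gcd(e_i)$ to $S_w$ and specializing at $t_i=1$ repairs Step 2 --- but this is just the paper's reduction to $x^m$ in disguise. So: same method, correct in the main case, with an unclosed (though closable) hole in the case the paper handles trivially.
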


\begin{proof}
 We may assume $w\in F^1_n\setminus F_n^2$ (for the case
$w \notin F^1$ the result is easily reduced to the case $w = x^m$ where the inclusion $u \in \Imm\, \w$ is obvious for the cases where $p\nmid m$).


Also, denote by the same symbol $\w$ the corresponding word map $\SL_2(A)^n\rightarrow \SL_2(A)$ (recall that $A = \Z[t_1^{\pm1}, \dots, t_n^{\pm 1}, s_1, \dots, s_n]$).
Consider the restriction of $\w$ to $\Upsilon$ (see \eqref{equa5.101}):
$Res_{\Upsilon} \w \colon \Upsilon^n\rightarrow\Upsilon$. Since $w\in F^1_n\setminus F_n^2$, the Magnus embedding  (see \eqref{equa5.101}) implies
$$\w (\zeta_1, \ldots, \zeta_n) =\begin{pmatrix} 1& f_w\cr 0 &1\cr \end{pmatrix}\ne \begin{pmatrix} 1& 0\cr 0 &1\cr \end{pmatrix}$$
where $0\ne f_w = f_w(t^{\pm 1}_1, \dots, t^{\pm 1}_n, s_1, \ldots, s_n)\in A$. 
Put
$$S_w = \text{all common  prime divisors of all coefficients of }\, f_w(t^{\pm 1}_1, \dots, t^{\pm 1}_n, s_1, \dots, s_n).$$
Let $p = \ch \,L \notin S_w$. For every $a \in A$ 
denote by $\bar{a}$ the image of $a$ with respect to the natural homomorphism $A\rightarrow A/(p) = (\Z/p\Z)[t_1^{\pm1}, \dots, t_n^{\pm 1}, s_1, \dots, s_n]$.
Since $p \notin S_w$, we have $\bar{f}_w\ne 0$. The matrices $\zeta_i$ may be viewed as matrices over $A/(p)$
because their entries are variables $t_i^{\pm 1}, s_i$ (see \eqref{equa5.100}). Thus, we have the matrix equality over the ring $A/(p)$:
\begin{equation}
\label{equa5.3}
\w (\zeta_1, \dots, \zeta_n) =\begin{pmatrix} 1& \bar{f}_w\cr 0 &1\cr \end{pmatrix}\ne \begin{pmatrix} 1& 0\cr 0 &1\cr \end{pmatrix}.
\end{equation}
Since $L$ is an infinite field, the Laurent polynomial $\bar{f}_\omega (t_1, \dots, t_n, s_1, \ldots, s_n) $ is not identically zero on $(L^*)^{n}\times L^n$. Therefore there is $(\alpha_1, \dots, \alpha_n, \beta_1, \dots, \beta_n) \in (L*)^{n}\times L^n$ such that
$\bar{f}_\omega(\alpha_1, \ldots, \alpha_n, \beta_1, \ldots, \beta_n) \ne 0$. Now we get our statement from \eqref{equa5.3} applying the substitutions
$$\zeta_i \rightarrow \begin{pmatrix}\alpha_i&\beta_i\cr 0&\alpha_i^{-1}\cr \end{pmatrix}. $$
\end{proof}

Together with Theorem E, this gives the following corollary.

\begin{cor}
Let $G = \PGL_2(K)$ where $K$ is an algebraically closed field, and let $w\notin F_n^2$.
Then there exists a finite set of primes $S_w$ such that if $p = \ch \,K \notin S_w$ (in particular, if $p =0$),
then the word map $\w\colon G^n\rightarrow G$ is surjective.
\end{cor}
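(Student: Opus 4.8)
The plan is to combine the theorem just proved (the existence of a non-trivial unipotent element in $\Imm\,\w$ for $\w\colon \SL_2(L)^n\to \SL_2(L)$ when $w\notin F_n^2$ and $\ch L=p\notin S_w$) with Theorem~E on semisimple elements, exactly as in the passage from Theorem~H and Theorem~E to Corollary~I in characteristic zero. First I would fix the algebraically closed field $K$ with $\ch K = p \notin S_w$, where $S_w$ is the finite set of primes furnished by the previous theorem applied to $L = K$. Since $K$ is infinite, that theorem guarantees a non-trivial unipotent element $u_0\in\Imm\,\w$ inside $\SL_2(K)$.

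Next I would pass to $\PGL_2(K)$ via the natural projection $\pi\colon\SL_2(K)\to\PGL_2(K)$, which is a surjective homomorphism of algebraic groups; hence $\pi(\Imm\,\w_{\SL_2}) \subseteq \Imm\,\w_{\PGL_2}$, and it suffices to show the left-hand side is everything. The elements of $\PGL_2(K)$ split into three conjugacy-type classes: the identity (clearly in the image, since $w(1,\dots,1)=1$), the semisimple (regular) elements, and the unipotent ones. For the unipotent elements of $\PGL_2(K)$: they form a single conjugacy class $C_u$, and $\pi(u_0)$ is a non-trivial unipotent element of $\PGL_2(K)$, so by $\Aut(\PGL_2(K))$-invariance (equivalently, conjugation-invariance) of $\Imm\,\w$, the whole class $C_u$ lies in $\Imm\,\w_{\PGL_2}$. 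For the semisimple elements: every semisimple element of $\SL_2(K)$ except possibly $-1$ lies in $\Imm\,\w_{\SL_2}$ by Theorem~E, and $\pi(-1)=1$ which is already covered; since every semisimple element of $\PGL_2(K)$ is the image under $\pi$ of a semisimple element of $\SL_2(K)$ other than $-1$, all semisimple elements of $\PGL_2(K)$ are in the image. Combining the three cases gives $\Imm\,\w_{\PGL_2} = \PGL_2(K)$, i.e.\ surjectivity.

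I expect the only genuine subtlety to be the bookkeeping at $w\notin F_n^1$ versus $w\in F_n^1\setminus F_n^2$: if $w\notin[F_n,F_n]$ then $w$ induces, after killing the commutators and specializing the other variables, a power word $x\mapsto x^m$ with $m\neq 0$, and one should note that $m$ may be divisible by $p$ so that $x\mapsto x^m$ has no non-trivial unipotent in its image — but in that case one enlarges $S_w$ to also include the prime divisors of $m$ (exactly as the proof of the previous theorem already does), so the statement still holds with an appropriately enlarged finite set $S_w$. Apart from this, there is no hard step: everything is a direct concatenation of the quoted results together with the conjugation-invariance of the image and the surjectivity of $\pi$.

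\begin{proof}
Let $S_w$ be the finite set of primes produced by the previous theorem applied to the infinite field $K$ (enlarging it, if $w\notin F_n^1$, to contain also the prime divisors of the exponent $m$ appearing when $w$ is reduced modulo $[F_n,F_n]$). Assume $p=\ch K\notin S_w$. By that theorem, $\Imm\,\w$ contains a non-trivial unipotent element $u_0\in\SL_2(K)$. Let $\pi\colon\SL_2(K)\to\PGL_2(K)$ be the canonical projection; it is a surjective homomorphism of algebraic groups, so $\pi(\Imm\,\w_{\SL_2})\subseteq\Imm\,\w_{\PGL_2}$, and it suffices to prove that $\Imm\,\w_{\PGL_2}=\PGL_2(K)$.

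The identity lies in $\Imm\,\w_{\PGL_2}$ since $w(1,\dots,1)=1$. By Theorem~E, every semisimple element of $\SL_2(K)$ except possibly $-1$ lies in $\Imm\,\w_{\SL_2}$; as $\pi(-1)=1$ and every semisimple element of $\PGL_2(K)$ is the $\pi$-image of a semisimple element of $\SL_2(K)$ distinct from $-1$, all semisimple elements of $\PGL_2(K)$ lie in $\Imm\,\w_{\PGL_2}$. Finally, $\pi(u_0)$ is a non-trivial unipotent element of $\PGL_2(K)$; since all non-trivial unipotent elements of $\PGL_2(K)$ are conjugate and $\Imm\,\w_{\PGL_2}$ is invariant under conjugation (indeed under $\Aut(\PGL_2(K))$), all unipotent elements of $\PGL_2(K)$ lie in $\Imm\,\w_{\PGL_2}$. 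Since every element of $\PGL_2(K)$ is the identity, semisimple, or unipotent, we conclude $\Imm\,\w_{\PGL_2}=\PGL_2(K)$.
\end{proof}
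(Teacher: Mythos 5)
Your proof is correct and follows exactly the route the paper intends: the paper derives this corollary simply by combining the preceding theorem (a non-trivial unipotent element lies in $\Imm\,\w$ when $p\notin S_w$) with Theorem~E, i.e.\ with the fact that $\Imm\,\w\supset G\setminus C_u$ in $\PGL_2(K)$, and then filling in the single unipotent class by conjugacy-invariance. Your bookkeeping remark about $w\notin[F_n,F_n]$ is already absorbed into the theorem's definition of $S_w$, so no enlargement is actually needed.
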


\section{Word maps with constants} \label{sec:const}

Studying word maps, we are naturally led to extending the set-up by considering words with constants, see \cite{Gor2}, \cite{GKP1}, \cite{GKP2}.
(As a vague analogy, one can think of investigating any functions in $n$ variables and then considering substitutions of constants
instead of some variables.)

\bigskip

Let $G$ be a group, let
$
\Sigma = (\sigma_1, \dots, \sigma_r)$ where $\sigma_i\in
G\setminus Z(G)\,\,\,\text{for every}\,\,\,i = 1, \dots, r,
$
and let $w_1, \dots, w_{r+1} \in F_n$. The expression
$
w_\Sigma = w_1\sigma_1w_2\sigma_2\cdots w_r\sigma_rw_{r+1}
$
is called {\it a word with constants} (or a generalized monomial) if
the sequence $w_2, \dots, w_r$ does not contain the identity word.

Equivalently, one can think of a word with constants as of an element
of the free product $G\ast F_n$, see, e.g., \cite{KT}.

We will view a word $w\in F_n$ as a word with constants $w_\Sigma$
with $\Sigma =\emptyset$ and $w=w_1$.
A word with constants also induces a map
$
\w_\Sigma \colon G^n \rightarrow G.
$

If  $G=\mathcal G(K)$ where $\mathcal G$ is a semisimple algebraic
group, then
$\Imm_{\w_\Sigma}$ is not necessarily Zariski dense in $\G$ as in Borel's Theorem.

\begin{example}
\label{ex3.3}
Let $w_\Sigma$ be a word with constants $\Sigma = (\sigma_1,\dots, \sigma_r)$,
and let $\tau \in \G$. Further, let $w^\prime_{\Sigma^\prime} = w_\Sigma \tau w_\Sigma^{-1}$.
Then $\Imm \w^\prime_{\Sigma^\prime}$ is contained in the conjugacy class of $\tau$, and therefore the map $\w_{\Sigma^\prime}^\prime$
cannot be dominant. The same refers to the word map for the word $w^{\prime\prime}_{\Sigma^{\prime\prime}} = [w_\Sigma, \tau]$,
where the image is equal to $(\Imm \w^\prime_{\Sigma^\prime})\,\,\tau^{-1}$.
\end{example}

Consider some problems related to words with constants.

\subsection{Covering number.}  Consider the word  with constants
$
w_\Sigma = x_1 \sigma_1x_1^{-1} x_2 \sigma_2 x_2^{-1}\cdots x_m \sigma_m x_m^{-1}
$ and the word map $\w_\Sigma\colon G^m\rightarrow G$.
Then $\Imm {\w_\Sigma} = C_1C_2\cdots C_m$ where $C_i$ is the
conjugacy class of $\sigma_i$. The minimal number $d  \in \N$ such that $\Imm \w_\Sigma = G$ for every $\Sigma = (\sigma_1, \dots, \sigma_m)$ with $m > d$ is called the {\it extended covering number} of $G$;
it is called the {\it covering number} under the additional condition $\sigma_1 = \sigma_2= \cdots = \sigma_m$.

If $G = \G(K)$ where $\G$ is a simple algebraic $K$-group, then $\w_\Sigma$ is dominant for $m \geq 2\, \rank \,G+1$ (see \cite{Gor1}),
and therefore it is  surjective for $m \geq 4\, \rank\,(G)+2$. We do not discuss here
numerous computations of precise values of covering numbers for different types of algebraic groups as well as particular cases
where the set $\Sigma$ consists of special elements, reflections, root subgroups, etc. It is worth mentioning the general result
by Nikolov \cite{Ni} saying that the extended covering number of $G$ can be arbitrarily large when $G$ runs over all finite groups
of Lie type.

\subsection{Thompson's Conjecture.}
Thompson's Conjecture asserts that
any finite simple group $G$ contains a conjugacy class $C$ such that $C^2 = G$. The conjecture has been proved for $A_n, n \geq 5$, the sporadic groups and the simple groups of Lie type over fields containing more than  $8$ elements (see \cite{EG1} and the references therein). The existence of such a conjugacy class has also been proved for the cases $G = \G(K)/Z(\G(K))$ where $\G$ is a split, simple, simply connected
algebraic group over an infinite field $K$ or a simple anisotropic group
over $K = \R$ (a compact Lie group) (see \cite{EG1}, \cite{Gor1}, \cite{ET}).

Note that the Thompson conjecture is the question on the surjectivity of word maps with constants induced by $w_\Sigma = x\sigma x^{-1} y\sigma y^{-1}$
(here $\Sigma=\{\sigma\}$).

\subsection{Identities with constants.} For  a simple algebraic group $G$ it may happen that a word map with constants $\w_\Sigma \colon G^n \rightarrow G$
is trivial (that is, $\Imm\w_\Sigma = \{1\}$) for a non-trivial word with constants $w_\Sigma$. Such a word $w_\Sigma$  is called an {\it identity with constants}.
Identities with constants were studied, in particular, in \cite{GM}, \cite{To}, \cite{Gor2}, \cite{Step1}, \cite{Step2}.
Interestingly, they play an important role
in description of some ``big'' subgroups of $G$ (see \cite{Step1}, \cite{Step2}).

A simple group $G$ has identities with constants if and only if the corresponding root system $R$ contains roots of different lengths. Moreover, the constants
are special ``small'' elements \cite{Gor2}. For any simple group with roots of different lengths there is, in any case, at least one example of an identity with constants \cite{Gor2}, \cite{Step1}.
However, there is no description of all such identities.

\subsection{Dimension of the image of general word maps with constants.}
Throughout this and the next section we assume that the ground field is algebraically closed.
One of the first questions to ask when describing word maps with constants is the question on the dimension of the image.
Let $w_\Sigma = w_1\sigma_1w_2\sigma_2\cdots
w_r\sigma_rw_{r+1}$ be a word with constants where $w_i = w_i(x_1, \dots, x_n)\in F_n$ and
$\Sigma = (\sigma_1, \dots, \sigma_r)$ is an $r$-tuple of elements of a semisimple algebraic group $G$.
In \cite{GKP2} we proved that there exists an open subset $U\subset G^r$ such that for all $\Sigma \in U$
the word maps with constants $\w_\Sigma \colon G^n \rightarrow G$ have the images of the same dimension $\d$.

We will not strictly follow the definition of a word map and admit that every $\Sigma \in G^r$ induces the word map with constants $\w_\Sigma$ .
(In fact, if $\Sigma$ contains elements from the centre of $G$, we may have $w_\Sigma = 1$.
But in such a case we put $w_\Sigma (g_1, \dots, g_n) = 1$ for every $(g_1, \dots, g_n) \in G^n$ by definition.)
Then for every $\Sigma \in G^r$ we have
$$\dim \Imm \w_\Sigma \leq \d$$
(see \cite{GKP2}). Thus, for given words $w_1, \dots, w_{r+1}$ there is a ``general'' dimension of the image of the word map with constants  $w_\Sigma = w_1\sigma_1w_2\sigma_2\cdots
w_r\sigma_rw_{r+1}$, which is maximal among all possible dimensions.

We may consider the word
$$w^Y = w^Y(x_1, \dots, x_n, y_1, \dots, y_r) = w_1 y_1^{k_1}w_2y_2^{k_2}w_3 \cdots w_r y_r^{k_r}w_{r+1}\in F_{n+r}$$
such that the word with constants $w_\Sigma = w^Y(x_1, \dots, x_n, \sigma_1, \dots\sigma_r)$ is obtained from $w^Y$
by substituting the $r$-tuple $\Sigma$ instead of $(y_1, \dots, y_r)$. In such a case we can generalize the result mentioned
above to any word $\omega (x_1, \dots, x_n, y_1, \ldots, y_r) \in F_{n+r}$
instead of the words of the special form $w^Y$.

\begin{theorem}
\label{th7.7}
Let $G$ be a simple algebraic group.
Let $\omega = \omega (x_1, \dots, x_n, y_1, \dots, y_r) \in F_{n+r}$ be any non-trivial word in $n+r$ variables.
Then there exists an open subset $U\subset G^r$ such that for all $\Sigma \in U$
the word maps with constants $\w_\Sigma \colon G^n \rightarrow G$ corresponding to the word with constants
$w_\Sigma = \omega(x_1, \dots, x_n, \sigma_1, \dots, \sigma_r)$
have the images of the same dimension $\d$. Moreover, for every $\Sigma \in G^r$ we have
$$\dim \Imm \w_\Sigma\leq \d.$$
\end{theorem}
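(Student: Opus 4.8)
The plan is to reduce Theorem~\ref{th7.7} to the already-established case of the special words $w^Y = w_1 y_1^{k_1} w_2 y_2^{k_2} \cdots w_r y_r^{k_r} w_{r+1}$ treated in \cite{GKP2}, using the $\Aut(F_{n+r})$-invariance of images recorded in Proposition~\ref{pr1.1} together with a fibration/upper-semicontinuity argument. First I would set up the ``universal'' picture: consider the single morphism of varieties $\Phi\colon G^n\times G^r\to G$ given by $\Phi(g_1,\dots,g_n,t_1,\dots,t_r)=\omega(g_1,\dots,g_n,t_1,\dots,t_r)$, and let $\pi\colon G^n\times G^r\to G^r$ be the projection onto the last $r$ coordinates. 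For $\Sigma\in G^r$ the fibre $\pi^{-1}(\Sigma)$ is a copy of $G^n$, and $\Phi$ restricted to it is exactly the word-with-constants map $\w_\Sigma$; so $\Imm\w_\Sigma=\Phi(\pi^{-1}(\Sigma))$. The function $\Sigma\mapsto\dim\overline{\Imm\w_\Sigma}=\dim\overline{\Phi(\pi^{-1}(\Sigma))}$ is upper semicontinuous on $G^r$ (Chevalley's theorem on the dimension of fibres of $\Phi$, applied after stratifying the image, or directly via the standard semicontinuity of $\dim$ of images of a family of morphisms parametrized by an irreducible base). Hence there is a maximal value $\d$ attained on a nonempty open subset $U\subset G^r$, and $\dim\Imm\w_\Sigma\le\d$ for all $\Sigma\in G^r$; this already gives the displayed inequality and the existence of $U$, provided we also know that $\dim\overline{\Imm\w_\Sigma}=\dim\Imm\w_\Sigma$ for $\Sigma$ in an open set, which follows because the constructible image of a morphism contains a dense open subset of its closure.

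The point still to be addressed is that this $\d$ is the \emph{same} $\d$ as in \cite{GKP2}, i.e.\ that an arbitrary word $\omega$ in $n+r$ variables has the same ``generic constant-image dimension'' as some word of the special shape $w^Y$; this is what makes the theorem a genuine generalization rather than a new and incomparable statement. Here I would invoke Proposition~\ref{pr1.1}: applying an automorphism of $F_{n+r}$ does not change the image of the induced word map $G^{n+r}\to G$. Any nontrivial $\omega\in F_{n+r}$ can, after a suitable automorphism of $F_{n+r}$ that fixes (or permutes) the ``$y$''-variables appropriately, be brought to a normal form in which the $y_j$ occur in separated blocks — the standard rewriting $w_1 y_{j_1}^{\pm1}w_2 y_{j_2}^{\pm 1}\cdots$ after collecting powers; the exponents $k_j$ and the connecting words $w_i$ that result are exactly the data defining a word of type $w^Y$ (with possibly repeated $y$'s, which one handles by first passing to distinct variables and noting that specialization $y_j\mapsto\sigma_{j}$ is compatible). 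Since the full word map $G^{n+r}\to G$ has an image that is $\Aut(F_{n+r})$-invariant, and since restriction to the fibre $\pi^{-1}(\Sigma)$ for generic $\Sigma$ sees this global image faithfully enough to compute the generic fibre dimension, the generic value $\d$ for $\omega$ coincides with the one for the normalized $w^Y$, which is the quantity computed in \cite{GKP2}.

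The hard part will be the last step: making precise the claim that ``$\Aut(F_{n+r})$-invariance of the total image'' transfers to ``equality of generic constant-image dimensions.'' Automorphisms of $F_{n+r}$ act on the source $G^{n+r}$ by a (nonlinear) automorphism of the variety that does \emph{not} in general respect the projection $\pi$ onto the last $r$ coordinates, so the bijection on total images does not immediately induce a bijection on the individual fibres $\pi^{-1}(\Sigma)$. What saves the argument is that one only needs the \emph{generic} fibre: by generic flatness / generic smoothness applied to $\Phi$ over the base $G^r$, the fibre dimension $\dim\overline{\Phi(\pi^{-1}(\Sigma))}$ over a generic $\Sigma$ equals $\dim\overline{\Imm\Phi}-\dim(\text{generic fibre of }\pi\text{ over }\overline{\Imm\Phi})$, a quantity that depends only on $\overline{\Imm\Phi}$ and hence is an $\Aut(F_{n+r})$-invariant. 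So I would phrase the proof as: (1) define $\d:=\dim\overline{\Imm\Phi_\omega}-(\text{generic }\pi\text{-fibre dimension})$; (2) show by upper semicontinuity that $\dim\Imm\w_\Sigma\le\d$ for all $\Sigma$, with equality on a nonempty open $U$; (3) observe that both ingredients defining $\d$ are unchanged under $\Aut(F_{n+r})$, hence $\d$ agrees with the invariant computed in \cite{GKP2} for the normalized word; and (4) record that for $\Sigma\in U$, $\dim\Imm\w_\Sigma=\dim\overline{\Imm\w_\Sigma}=\d$. I expect steps (1)--(2) to be routine algebraic geometry and step (3), the bookkeeping of which $\Aut(F_{n+r})$-orbit a general $\omega$ lands in and the compatibility with specialization of the $y$-variables, to require the most care.
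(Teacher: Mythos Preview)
Your first paragraph is close to the paper's approach but has two issues. First, the direction of semicontinuity is misstated: an upper semicontinuous function attains its maximum on a \emph{closed} set, not an open one, so you cannot conclude ``maximal value $\d$ on a nonempty open $U$'' from upper semicontinuity. Second, and more seriously, the function $\Sigma\mapsto\dim\overline{\Imm\w_\Sigma}$ is not obviously semicontinuous in either direction: if you try to get it from the projection $q\colon\overline{\Imm(\Phi,\pi)}\to G^r$, then $q^{-1}(\Sigma)$ may be strictly larger than $\overline{\Imm\w_\Sigma}\times\{\Sigma\}$ for special $\Sigma$, and upper semicontinuity of $\dim q^{-1}(\Sigma)$ gives an inequality in the wrong direction. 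The paper fixes this by working instead with the \emph{fibre} dimensions of the combined map $F=(\tilde\omega,p_Y)\colon G^{n+r}\to G\times G^r$: its generic fibre dimension $\mathfrak f$ is attained over an open set of the image, and every fibre has dimension $\ge\mathfrak f$; since each fibre of $\w_\Sigma$ is a fibre of $F$, the generic fibre of $\w_\Sigma$ has dimension $\ge\mathfrak f$, hence $\dim\overline{\Imm\w_\Sigma}\le n\dim G-\mathfrak f=:\d$, with equality for $\Sigma$ in an open set. This is the correct implementation of your intuition.

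Everything from your second paragraph onward is unnecessary and partly confused. The theorem only asserts the existence of some $\d$ with the stated properties; it does not claim any compatibility with the $\d$ of \cite{GKP2}, and the paper makes no such reduction. The remark ``the proof is almost the same as in \cite{GKP2}'' means literally that: the fibre-dimension argument above works verbatim for an arbitrary $\omega\in F_{n+r}$, with no need to bring $\omega$ into a normal form $w^Y$ or to invoke $\Aut(F_{n+r})$. Your step~(3) also contains a garbled formula (``generic $\pi$-fibre dimension over $\overline{\Imm\Phi}$'' does not parse, since $\pi$ maps to $G^r$ while $\Imm\Phi\subset G$), and the key invariance claim is dubious precisely for the reason you yourself flag: automorphisms of $F_{n+r}$ do not respect the projection $\pi$, so there is no evident reason the quantity you write down is $\Aut(F_{n+r})$-invariant. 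Drop this entire detour; once the fibre-dimension argument is set up correctly as above, the proof is complete.
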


\begin{proof}
The proof is almost the same as in \cite[Theorem~1.1]{GKP2}.

We have dominant maps
$$
\o\colon G^{n+r}\rightarrow G\,\,\,\,\text{and}\,\,\,p_Y\colon
G^{n+r}\rightarrow G^r
$$
where $p_Y$ is the projection onto the components $n+1, \dots, n+r$.
Consider the map
$$
F\colon G^{n+r}\stackrel{(\o, p_Y)}{\rightarrow} G\times G^r.
$$
Let $X =\overline{\Imm F}\subset G\times G^r$, and let
$p^\prime_Y\colon X \rightarrow G^r$ be the projection onto $G^r$.
Then $p_Y^\prime(X) = G^r$. Indeed, for every $r$-tuple
$\Sigma=(\sigma_1, \dots, \sigma_r) \in G^r$ there is a non-empty
set
$$
Z_\Sigma = \{(\w_\Sigma(g_1, \dots, g_n), \sigma_1, \dots,
\sigma_r)\,\,\,\mid\,\,\, (g_1, \dots, g_n )\in G^n\}\subset X.
$$


There exists an open subset $\mathcal V$ of $X$ such that:
\begin{itemize}
\item[(a)] $\mathcal V\subset \Imm F$,
\item[(b)] for every $v \in \mathcal V$ the dimension of every
irreducible component of the preimage $F^{-1} (v) $ is a fixed
number $\mathfrak{f}$,
\item[(c)] for every $u \in \Imm F$ the dimension of every irreducible component of the preimage $F^{-1}
(u)$ is greater than or equal to $\mathfrak{f}$.
\end{itemize}
Let now $\mathcal U\subset G^r$ be an open subset contained in
$p_Y^\prime(\mathcal V)$, and let  $\Sigma = (\sigma_1, \dots,
\sigma_r)\in \mathcal U$. Let $v\in \mathcal V$ be such that
$p^\prime_Y(v) = \Sigma$. Then $v = (\w_\Sigma(g_1, \dots, g_n),
\sigma_1, \dots, \sigma_r)$ for some $(g_1, \ldots, g_n)\in G^n$,
and the dimension of every irreducible component of the preimage
$F^{-1} (v) $ is equal to $\mathfrak{f}$, see (b). Further, the
Zariski closure $\overline{Z}_\Sigma$ is an irreducible closed
subset of $X$. Indeed, $Z_\Sigma$ is the image of an irreducible
variety under the morphism $F_\Sigma\colon G^n\rightarrow G\times
G^r$ given by the formula $F_\Sigma (x_1, \dots, x_n) =
(w_\Sigma(x_1, \dots, x_n), \sigma_1, \dots, \sigma_r)$. Note that
$v \in Z_\Sigma \cap \mathcal V$. Hence there is an open subset
$\mathcal W$ of $\overline{Z}_\Sigma$ such that $v\in \mathcal W
\subset \mathcal V$. Since $\mathcal W \subset \mathcal V$, the
dimension of every irreducible component of $F^{-1}(v^\prime)$ for
every point $v^\prime \in \mathcal W$ is equal to $\mathfrak{f}$,
see (b). Also, for every $v^\prime \in \mathcal W$ the closed subset
$F^{-1}(v^\prime)\subset G^n\times G^r$ is isomorphic (as an affine
variety) to the closed subset $F^{-1}_\Sigma(v^\prime)\subset G^n$.
Hence the dimension of the general fibre of the morphism $F_\Sigma
\colon G^n\rightarrow Z_{\Sigma}\subset G\times G^r$ is equal to $\mathfrak{f}$, and
therefore
$$\dim \overline{\Imm F_\Sigma} = n \dim G - \mathfrak{f}.$$ The
construction of $F_\Sigma$ shows that $\overline{\Imm F_\Sigma} $ is
isomorphic to $\overline{\Imm \w_\Sigma}$ (the projection of
$G\times G^r$ onto the first component gives this isomorphism).
Hence $\dim \overline{\Imm \w_\Sigma} = n \dim G - \mathfrak{f}$
for every $\Sigma \in \mathcal U$.

\bigskip

Let $\Sigma^\prime = (\sigma^\prime_1, \dots, \sigma^\prime_r) \in
G^r$ (possibly,  $\sigma^\prime_i \in Z(G)$ for some $i$). The maps
$\w_{\Sigma^\prime} \colon G^n\rightarrow G, F_{\Sigma^\prime}\colon
G^n \rightarrow G\times G^r$ have the same fibres. Moreover, these
fibres are also isomorphic to the fibres of the map $F\colon G^n\times G^r\rightarrow
G\times G^r$ which correspond to points of the form
$(\w_{\Sigma^\prime}(g_1, \dots, g_n), \sigma_1^\prime, \dots,
\sigma_r^\prime)$. Since the dimension of every fibre of $F$ is at
least $\mathfrak{f}$ (see (c)), the dimension $\mathfrak{f}^\prime$
of the general fibre of $\w_{\Sigma^\prime}$ is at least
$\mathfrak{f}$. Hence
$$\dim \overline{\Imm \w_{\Sigma^\prime}} = n\dim G -\mathfrak{f}^\prime \leq \mathfrak{d} = n \dim G-\mathfrak{f}.$$
\end{proof}

The following Corollary is a strengthening of \cite[ Corollary~1.4]{GKP2}.

\begin{cor}
Let $\omega = \omega (x_1, \dots, x_n, y_1, \dots, y_r) \in F_{n+r}$ be any non-trivial word in $n+r$ variables.  If
$\omega(x_1, \dots, x_n, 1,\dots, 1) \ne 1$, then  there exists an open subset $U\subset G^r$ such that for every $\Sigma \in U$
the word map with constants $\w_\Sigma \colon G^n \rightarrow G$  corresponding to $w_\Sigma = \omega(x_1, \dots, x_n, \sigma_1, \dots, \sigma_r)$ is dominant.
\end{cor}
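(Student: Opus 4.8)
The plan is to deduce this immediately from Theorem \ref{th7.7} together with Borel's Theorem A. By Theorem \ref{th7.7} applied to $\omega$, there are an open subset $U\subset G^r$ and a number $\mathfrak{d}$ with $\dim\overline{\Imm\w_\Sigma}=\mathfrak{d}$ for every $\Sigma\in U$, and moreover $\dim\overline{\Imm\w_\Sigma}\le\mathfrak{d}$ for every $\Sigma\in G^r$ (in the extended sense of $\w_\Sigma$ introduced before that theorem, where central constants are allowed). So it is enough to prove that $\mathfrak{d}=\dim G$: then for each $\Sigma\in U$ the closed irreducible subset $\overline{\Imm\w_\Sigma}$ of the irreducible variety $G$ has the same dimension as $G$, whence $\overline{\Imm\w_\Sigma}=G$, i.e. $\w_\Sigma$ is dominant.

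To get $\mathfrak{d}=\dim G$, I would evaluate the inequality $\dim\overline{\Imm\w_\Sigma}\le\mathfrak{d}$ at the particular point $\Sigma_0=(1,\dots,1)\in G^r$. The specialization of the word with constants at $\Sigma_0$ is $w_{\Sigma_0}=\omega(x_1,\dots,x_n,1,\dots,1)$, which by hypothesis is a non-identity element of $F_n$. Hence the central-constant convention is vacuous at $\Sigma_0$, and $\w_{\Sigma_0}\colon G^n\to G$ is just the ordinary word map of a non-trivial word on the (connected) semisimple group $G$. By Theorem A this map is dominant, so $\dim\overline{\Imm\w_{\Sigma_0}}=\dim G$, and therefore $\dim G\le\mathfrak{d}$. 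The opposite inequality $\mathfrak{d}\le\dim G$ is trivial since every image lies in $G$, so $\mathfrak{d}=\dim G$, which completes the argument.

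I do not expect a genuine obstacle here: all the real work is in Theorem \ref{th7.7} (whose proof was carried out above) and in Borel's theorem. The only point requiring a moment's care is that $\Sigma_0$ has central (indeed trivial) entries and so is not an admissible tuple of constants in the strict sense; this is handled precisely by the extension of the definition of $\w_\Sigma$ to all of $G^r$ made before Theorem \ref{th7.7}, which is exactly what licenses applying that theorem's inequality at $\Sigma_0$, and by the hypothesis $\omega(x_1,\dots,x_n,1,\dots,1)\ne 1$, which guarantees that $\w_{\Sigma_0}$ coincides with a genuine (non-trivial) word map so that Theorem A applies.
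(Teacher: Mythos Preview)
Your argument is correct and is essentially the same as the paper's own proof: specialize to $\Sigma_0=(1,\dots,1)$, invoke Borel's Theorem~A to get $\dim\overline{\Imm\w_{\Sigma_0}}=\dim G$, and then use the inequality in Theorem~\ref{th7.7} to conclude that the generic dimension $\mathfrak{d}$ equals $\dim G$. Your remark about the convention for central constants is exactly the point that makes the specialization at $\Sigma_0$ legitimate.
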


\begin{proof}
Indeed, for $\Sigma_0 = (1, \dots, 1) \in \G^r$ the map $\w_{\Sigma_0} \colon \G^n\rightarrow \G$ is dominant according to the Borel Theorem.
Hence for a general map $\w_\Sigma \colon \G^n\rightarrow \G$, by Theorem \ref{th7.7} we have $\dim \overline{\Imm\,\w_\Sigma }\geq \dim \overline{\Imm\w_{\Sigma_0}} = \dim G.$
\end{proof}

\subsection{Word maps with constants and the quotient map.}  Let $G$ be a semisimple algebraic group, let $T$ be a maximal torus of $G$, and let $W$ be the Weyl group of $G$.
Then there is the {\it quotient map} $$\pi \colon G\rightarrow T/W$$ (see \cite[3.1]{SS}) taking every $g\in G$ to the class of its semisimple part $g_s$ in $T/W$
(namely, if $g = g_s g_u$  is the Jordan decomposition and $t = x g_sx^{-1} \in T$ is conjugate to $g_s$, then $\pi(g) = \bar{t}$ where $\bar{t}$
is the class of $t$ in $T/W$).

Let now $\w_\Sigma\colon G^n\rightarrow G$ be a word map with constants. Consider the composition $\pi\circ \w_\Sigma\colon G^n\rightarrow T/W$. If this map is dominant,
then so is the word map with constants $\w^\prime_\Sigma\colon G^{n+1}\rightarrow G$ corresponding to $w^\prime_\Sigma = yw_\Sigma y^{-1}$.
Indeed, $y$ here is a new variable, and therefore $\Imm w^\prime_\Sigma$ contains all elements in $G$ which are conjugate to elements from
$\Imm w_\Sigma$. Since $\pi\circ \w_\Sigma$ is dominant, the set $\Imm w^\prime_\Sigma$ contains an open subset of regular semisimple elements of $G$ and is therefore dense in $G$.
Thus, if the map $\pi\circ \w_\Sigma$ is dominant, the map $\w_\Sigma$ is ``dominant up to conjugacy'', that is,
almost all conjugacy classes of $G$ (except for some closed subset of $G$)  intersect $\Imm \w$.

In Theorem \ref{th7.7} we have the condition $\omega(x_1, \dots, x_n, 1, \dots, 1) \ne 1$ for getting a dominant map
$$\o(x_1, \dots, x_n, \sigma_1, \dots, \sigma_r) \colon  G^n  \rightarrow G$$
for a general $\Sigma = (\sigma_1, \dots, \sigma_r)\in G^r$. If we drop the condition $\omega(x_1, \dots, x_n,$
$ 1, \dots, 1) \ne 1$, we cannot expect the
dominance of the corresponding word map with constants. However, we may hope for the following dichotomy for any word $\omega(x_1, \dots, x_n,$
 $y_1, \dots, y_r)\in F_{n+r}$:
$$\Imm \pi\circ \o(x_1, \dots, x_n, \sigma_1, \dots, \sigma_r) = \begin{cases}\text{either just one point
 for every }\\
 \Sigma = (\sigma_1, \dots, \sigma_r) \in G^r,\\
\text{or}\\
\text{a dense subset in }\,\,\,T/W\,\,\,\text{for}\\ \text{ every }\, \Sigma = (\sigma_1, \dots, \sigma_r) \in U\\\text{ from some open set}\,\,\,U\in G^r.
\end{cases}\eqno(\clubsuit)$$
Perhaps such a dihotomy could be the best possible replacement of Borel's theorem
in the context of word maps with constants.

\begin{remark}
\label{rem8.8} Dichotomy ($\clubsuit$) definitely holds if $\rank
\,G = 1$. Indeed, in this case $\dim T/W =1$, and therefore the
image of the irreducible variety $G^n$ with respect to the morphism
$\pi\circ \o(x_1, \dots, x_n, \sigma_1, \dots, \sigma_r)$ is either
just one point, or a dense subset. We cannot drop the restriction
that the second alternative in ($\clubsuit$) holds only for an open
subset. Indeed, consider the word $\omega(x_1, x_2, y_1, y_2) =
x_1y_1x_1^{-1} x_2 y_2x_2^{-1}$. If $\Sigma =(\sigma_1, \sigma_2)$
is the set of ``small'' elements (say, root subgroup elements
$x_\alpha(s)$), then the image of $\o(x_1, x_2, \sigma_1, \sigma_2)$
is the product of the conjugacy classes $C_{\sigma_1}C_{\sigma_2}$,
each of which has ``small'' dimension. Therefore, if $\dim G > \dim
\overline{C_{\sigma_1}C_{\sigma_2}}$, we have $\overline{\Imm
\pi\circ \o(x_1, x_2, \sigma_1, \sigma_2)} <\dim T/W$.
\end{remark}

\begin{remark}
For a word $\omega(x_1, \dots, x_n, y)$ it has been proved in \cite{GKP2} that for $\Sigma = \sigma\in U $  from some open subset $U\subset G$ the  map
$\pi \circ \o(x_1, \dots, x_n, \sigma)\colon G^n\rightarrow T/W$
is dominant under the condition $\o(1, \dots, 1,y) = 1$.
However,
 the condition $\o(1, \dots, 1,y) = 1$ is rather strong
and
 $\clubsuit$ is  unknown even for the case where the set of constants $\Sigma$ is just one element.
\end{remark}
\begin{example}
Let $K$ be an algebraically closed field of characteristic zero and let $G = \G(K)$ where $\G$ is a simple adjoint $K$-group.
Consider $\omega (x, y) = x^a y^b x^c y^d, d \ne -b$ and assume for simplicity that $a$ is prime to 30.
We have $ \omega(1, \sigma) = \sigma^{b+d} \ne 1$ for almost all  $\sigma \in G$. But the word map with constants
$\pi\circ \o(x, \sigma)\colon G\rightarrow T/W$ is dominant for elements $\sigma$ from some open subset of $G$.
Indeed, we may assume $c = -a$ (otherwise we may use Theorem \ref{th7.7}).
Thus we have the word with constants $(x^a \sigma^b x^{-a}) \sigma^d$. There is an open subset $U\subset G$ such that if $\sigma \in U$,
then $\sigma^b, \sigma^d$ are regular semisimple elements of $G$. Let $C_{\sigma^b}$, $C_{\sigma^d}$ be the conjugacy
classes of $\sigma^b$, $\sigma^d$. Then $G\setminus Z(G)\subset C_{\sigma^b} C_{\sigma^d}$, and therefore
$\pi (C_{\sigma^b}\sigma^d)$ is a dense subset in $T/W$. On the other hand, the assumption on $a$ implies, by Theorem D., that for every $h\in G$ there
is $g\in G$ such that $g^a=h$, so that
$C_{\sigma^b}\sigma^d = \{g^a \sigma^b g^{-a}\sigma^d\,\,\,\mid\,\,g \in G\}$.
\end{example}



\begin{remark}
It is tempting to extend the results for words with constants
mentioned in this and in the previous sections to Lie polynomials with constants on
simple Lie algebras, or, even further, to associative
non-commutative polynomials on matrix algebras, in the spirit of
numerous analogies described, e.g., in \cite{KBKP} for genuine words
(without constants). Note that the latter case naturally includes
some innocent looking problems which are wide open; see, e.g.,
\cite{Sl} for the case of matrix equations of the form
$$
A_mX^m+\dots +A_1X +A_0=0.
$$
Even the case $m=2$ of quadratic matrix equations is tricky enough,
see \cite{Ge}. Even stating reasonable conjectures looks as a
challenge.
\end{remark}

\noindent{\it Acknowledgements.} We thank the referees for careful reading and thoughtful remarks, which were very helpful for improving the original version.


\begin{thebibliography}{LOST2}

\bibitem[AGKS]{AGKS}
N. Avni, T. Gelander, M. Kassabov, A. Shalev,
{\it Word values in $p$-adic and adelic groups},
Bull. London Math. Soc. {\bf 45} (2013), 1323--1330.



\bibitem[BGG]{BGG}
T. Bandman, S. Garion, F. Grunewald,
{\it On the surjectivity of Engel words on $PSL(2,q)$},
Groups Geom. Dyn. {\bf 6} (2012), 409--439.

\bibitem[BGK]{BGK}
T. Bandman, S. Garion, B. Kunyavskii,
{\it Equations in simple matrix groups: algebra, geometry, arithmetic, dynamics},
Cent. Eur. J. Math. {\bf 12} (2014), 175--211.

\bibitem[BGKP]{BGKP}
T. Bandman, N. Gordeev, B. Kunyavskii, E. Plotkin,
{\it  Equations in simple Lie algebras},
J. Algebra {\bf 355} (2012), 67--79.

\bibitem[BK]{BK}
T. Bandman, B. Kunyavskii,
{\it Criteria for equidistribution of solutions of word equations on $SL(2)$},
J. Algebra {\bf 382} (2013), 282--302.

\bibitem[BZ]{BZ}
T. Bandman, Yu. Zarhin,
{\it Surjectivity of certain word maps on $PSL(2, \mathbb C)$ and $SL(2, \mathbb C)$},
Eur. J. Math. {\bf 2} (2016), 614--643.

\bibitem[BM]{BM}
S. Bhaumik, A. Mandal,
{\it On the density of images of the power maps in Lie groups},
Arch. Math. (Basel) {\bf 110} (2018), 115--130.

\bibitem[Bl]{Bl}
H. J. Blau, {\it A fixed-point theorem for central elements in quasisimple groups},
Proc. Amer. Math. Soc. {\bf 112} (1994), 79--84.

\bibitem[Bo1]{Bo1}
A. Borel,
Linear Algebraic Groups,
Graduate Texts in Math., 2nd ed., Springer-Verlag, 1991.

\bibitem[Bo2]{Bo2}
A. Borel,
{\it  On free subgroups of semi-simple groups},
Enseign. Math. {\bf 29} (1983), 151--164.

\bibitem[Bou]{Bou}
N. Bourbaki,
\'El\'ements de math\'ematiques. Groupes et alg\`ebres de Lie,
Chap.~IV, V, VI, 2\`eme \'ed., Masson, Paris, 1981.


\bibitem[Ch1]{Ch1}
P. Chatterjee,
{\it On surjectivity of the power maps of solvable Lie groups},
J. Algebra {\bf 248} (2002), 669--687.

\bibitem[Ch2]{Ch2}
P. Chatterjee,
{\it On the surjectivity of the power maps of algebraic groups in characteristic zero},
Math. Res. Lett. {\bf 9} (2002), 741--756.

\bibitem[Ch3]{Ch3}
P. Chatterjee,
{\it On the surjectivity of the power maps of semisimple algebraic groups},
Math. Res. Lett. {\bf 10} (2003), 625--633.



\bibitem[Co]{Co}
B. Conrad, {\it Reductive group schemes}, in: ``Autour des sch\'emas en groupes'',
vol.~I, Panor. Synth\`eses {\bf 42/43}, Soc. Math. France, Paris, 2014, pp.~93--444.

\bibitem[DM]{DM}
S. G. Dani, A. Mandal,
{\it On the surjectivity of the power maps of a class of solvable groups},
J. Group Theory {\bf 20} (2017), 1089--1101.

\bibitem[ET]{ET}
A. Elkasapy,  A. Thom,
{\it About Got\^o's method showing surjectivity of word maps},
Indiana Univ. Math. J. {\bf 63} (2014), 1553--1565.

\bibitem[EG1]{EG1}
E. W. Ellers, N. Gordeev,
{\it  On the conjectures of J.~Thompson and O.~Ore},
Trans. Amer. Math. Soc. {\bf 350} (1998), 3657--3671.

\bibitem[EG2]{EG2}
E. W. Ellers, N. Gordeev,
{\it  Intersection of conjugacy classes with Bruhat cells in Chevalley groups},
Pacific J. Math. {\bf 214} (2004), 245--261.

\bibitem[Ge]{Ge}
S. Gelfand, {\it On the number of solutions of a quadratic
equation}, in: ``Globus: General Math. Seminar'', M.~A.~Tsfasman,
V.~V.~Prasolov (Eds.), no.~1, Independent Univ. of Moscow, Moscow,
2004, pp.~124--133. (Russian.)

\bibitem[GM]{GM}
I. Z. Golubchik, A. V. Mikhalev,
{\it Generalized group identities in classical groups},
Zapiski Nauchn. Sem. LOMI {\bf 114} (1982), 96--119; English transl. in
J. Math. Sci. (N.~Y.) {\bf 27} (1984),  2902--2918.

\bibitem[Gor1]{Gor1}
N. Gordeev,
{\it Product of conjugacy classes in algebraic groups}, I, II,
J. Algebra {\bf 173} (1995), 715--744, 745--779.

\bibitem[Gor2]{Gor2}
N. Gordeev,
{\it Freedom in conjugacy classes of simple algebraic groups and identities with constants},
Algebra i Analiz {\bf 9} (1997), no.~4, 63--78; English transl. in
St. Petersburg Math. J. {\bf 9} (1998), 709--723.

\bibitem[Gor3]{Gor3}
N. Gordeev,
{\it Products of conjugacy classes in perfect linear groups. Extended covering number},
Zapiski Nauchn. Sem. POMI {\bf 321} (2005), 67--89;
English transl. in J. Math. Sci. (N.Y.) {\bf 136} (2006), 3867--3879.

\bibitem[Gor4]{Gor4}
N. Gordeev,
{\it On Engel words on simple algebraic groups},
J. Algebra {\bf 425} (2015), 215--244.

\bibitem[GKP1]{GKP1}
N. L. Gordeev, B. E. Kunyavskii, E. B. Plotkin,
{\it Word maps and word maps with constants of simple algebraic groups},
Doklady Akad. Nauk {\bf 471} (2016), no.~2, 136--138; English transl. in
Doklady Math. {\bf 94} (2016), 632--634.

\bibitem[GKP2]{GKP2}
N. Gordeev, B. Kunyavskii, E. Plotkin,
{\it Word maps, word maps with constants and representation varieties of one-relator groups},
J. Algebra {\bf 500} (2018), 390--424.


\bibitem[GS]{GS}
N. Gordeev, J. Saxl,
{\it Products of conjugacy classes in Chevalley groups over local rings},
Algebra i Analiz {\bf 17} (2005), no.~2, 96--107; English transl. in
St. Petersburg Math. J. {\bf 17} (2006), 285--293.

\bibitem[Got]{Got}
M. Got\^o,
{\it A theorem on compact semi-simple groups},
J. Math. Soc. Japan {\bf 1} (1949), 270--272.

\bibitem[GG]{GG}
M. Goto, F. D. Grosshans,
Semisimple Lie Algebras,
Lecture Notes Pure Appl. Math., vol.~{\bf 38}, Marcel Dekker, INC., New York--Basel, 1978.

\bibitem[Ho]{Ho}
G. P. Hochschild, Basic Theory of Algebraic Groups and Lie Algebras,
Graduate Texts in Math., vol.~75, Springer-Verlag, Berlin--Heidelberg--New York, 1981.

\bibitem[HP]{HP}
D. F. Holt, W. Plesken (with an appendix by W.~Hanrath), Perfect
Groups,  Oxford Math. Monographs, Oxford Science Publications,  The
Clarendon Press, Oxford Univ. Press, New York, 1989.

\bibitem[HLS]{HLS}
C. Y. Hui, M. Larsen, A. Shalev,
{\it The Waring problem for Lie groups and Chevalley groups},
Israel J. Math. {\bf 210} (2015), 81--100.

\bibitem[Is]{Is}
I. M. Isaacs, {\it Commutators and the commutator subgroup}, Amer.
Math. Monthly {\bf 84} (1977), 720--722.

\bibitem[KBKP]{KBKP}
A. Kanel-Belov, B. Kunyavskii, E. Plotkin, {\it Word equations in
simple groups and polynomial equations in simple algebras}, Vestnik
St. Petersburg Univ. Math. {\bf 46} (2013), no.~1, 3--13.

\bibitem[KT]{KT}
A. Klyachko, A. Thom,
{\it New topological methods to solve equations over groups},
Algebr. Geom. Topol. {\bf 17} (2017), 331--353.

\bibitem[Ku]{Ku}
B. Kunyavskii,
{\it Equations in matrix groups and algebras over number fields
and rings: prolegomena to a lowbrow noncommutative Diophantine geometry},
in: ``Arithmetic and Geometry'', L.~V.~ Dieulefait {\it et al.} (Eds.), LMS Lecture
Notes, Cambridge Univ. Press, 2015, pp.~264--282.

\bibitem[LaS]{LaS}
M. Larsen, A. Shalev,
{\it Fibers of word maps and some applications},
J. Algebra {\bf 354} (2012), 36--48.

\bibitem[LST]{LST}
M. Larsen, A. Shalev, P. H. Tiep,
{\it  Waring problem for finite quasisimple groups},
Intern. Math. Res. Not. {\bf 2013}, no.~10, 2323--2348.


\bibitem[Li]{Li}
M. W. Liebeck, {\it Width questions for finite simple groups},
in: ``Groups St. Andrews 2013'',
C.~M.~Campbell, M.~R.~Quick, E.~F.~Robertson, C.~M.~Roney-Dougal (Eds.),
London Math. Soc. Lecture Note Ser., vol.~{\bf 422},  Cambridge Univ. Press,
Cambridge, 2015, pp.~51--72.


\bibitem [LOST1]{LOST1}
M. W. Liebeck, E. A. O'Brien, A. Shalev, P. H. Tiep, {\it The Ore
conjecture}, J. Eur. Math. Soc. {\bf 12} (2010), 939--1008.

\bibitem [LOST2]{LOST2}
M. W. Liebeck, E. A. O'Brien, A. Shalev, P. H. Tiep, {\it
Commutators in finite quasisimple groups},  Bull. Lond. Math. Soc.
{\bf 43} (2011), 1079--1092.




\bibitem[LM]{LM}
A. Lubotzky, A. R. Magid,
Varieties of Representations of Finitely Generated Groups,
Mem. Amer. Math. Soc. {\bf 58} (1985), no.~336.


\bibitem[Ma]{Ma}
G. Malle,
{\it The proof of Ore's conjecture [after Ellers--Gordeev and Liebeck--O'Brien--Shalev--Tiep]},
Ast\'erisque {\bf 361} (2014), exp. no.~1069, 325--348.

\bibitem[McN1]{McN1}
G. J. McNinch,
{\it Levi decompositions of a linear algebraic group},
Transform. Groups {\bf 15} (2010), 937--964.

\bibitem[McN2]{McN2}
G. J. McNinch,
{\it Linearity for actions on vector groups},
J. Algebra {\bf 397} (2014), 666--688.

\bibitem[Mo]{Mo}
G. D. Mostow,
{\it Fully reducible subgroups of algebraic groups},
Amer. J. Math. {\bf 78} (1956), 200--221.

\bibitem[Ni]{Ni}
N. Nikolov, {\it On the commutator width of perfect groups}, Bull.
London Math. Soc. {\bf 36} (2004), 30--36.

\bibitem[Or]{Or}
O. Ore,
{\it Some remarks on commutators},
Proc. Amer. Math. Soc. {\bf 272} (1951), 307--314.

\bibitem[PW]{PW}
S. Pasiencier, H.-C. Wang,
{\it Commutators in a semi-simple Lie group},
Proc. Amer. Math. Soc. {\bf 13} (1962), 907--913.

\bibitem[Re]{Re}
R. Ree,
{\it Commutators in semi-simple algebraic groups},
Proc. Amer. Math. Soc. {\bf 15} (1964),  457--460.

\bibitem[Se]{Se}
D. Segal,
Words: Notes on Verbal Width in Groups,
London Math. Soc. Lecture Note Ser., vol.~{\bf 361},
Cambridge Univ. Press, Cambridge, 2009.

\bibitem[Sh1]{Sh1}
A. Shalev,
{\it Word maps, conjugacy classes, and a non-commutative Waring-type theorem},
Ann. Math. {\bf 170} (2009), 1383--1416.

\bibitem[Sh2]{Sh2}
A. Shalev,
{\it Some results and problems in the theory of word maps},
in: ``Erd\"os Centennial'', L.~Lov\'asz, I.~Ruzsa, V.~T.~S\'os (Eds.),
Bolyai Soc. Math. Studies, vol.~{\bf 25}, Springer, 2013, pp.~611--649.

\bibitem[Sl]{Sl}
M. Slusky, {\it Zeros of $2\times 2$ matrix polynomials}, Comm.
Algebra {\bf 38} (2010), 4212--4223.


\bibitem[SS]{SS}
T. A. Springer, R. Steinberg,
{\it  Conjugacy classes},
in: ``Seminar on Algebraic Groups and Related Finite Groups'',
Lecture Notes Math., vol.~{\bf 131}, Springer-Verlag, Berlin--Heidelberg--New York, 1970, pp.~167--266.



\bibitem[Stei]{Stei}
R. Steinberg,
{\it On power maps in algebraic groups},
Math. Res. Lett. {\bf 10} (2003), 621--624.

\bibitem[Step1]{Step1}
A. Stepanov,
{\it Free product subgroups between Chevalley groups $G(\Phi,F)$ and $G(\Phi, F[t])$},
J. Algebra {\bf 324} (2010), 1549--1557.

\bibitem[Step2]{Step2}
A. Stepanov,
{\it Subring subgroups in Chevalley groups with doubly laced root systems},
J. Algebra {\bf 362} (2012), 12--29.


\bibitem[Th]{Th}
A. Thom,
{\it Convergent sequences in discrete groups},
Canad. Math. Bull. {\bf 56} (2013), 424--433.

\bibitem[To]{To}
G. M. Tomanov,
{\it Generalized group identities in linear groups},
Mat. Sbornik {\bf 123} (1984), no.~1, 36--49; English transl. in
Math. USSR Sb. {\bf 51} (1985), 33--46.

\bibitem[VO]{VO}
E. B. Vinberg, A. L. Onishchik,
Seminar on Lie groups and Algebraic Groups, 2nd ed., Moscow, 1995;
English transl. of the 1st ed. ``Lie Groups and Algebraic Groups'', Springer-Verlag, Berlin--Heidelberg, 1990.

\end{thebibliography}
\end{document}